

\documentclass[10pt]{amsart}
\usepackage{color, graphicx, comment}
\usepackage{tikz}

\allowdisplaybreaks

\numberwithin{equation}{section} \overfullrule 5pt
\newtheorem{Theorem}{Theorem}[section]

\theoremstyle{definition}
\newtheorem{Definition}{Definition}[section]

\font\KFracFont=cmr12 at 20pt
\def\KKK{\mathop{\lower 4pt\hbox{\KFracFont K}}\limits}

\newcommand{\lastcfrac}[2]{%
  \vphantom{\cfrac{#1}{#2}}%
  \raisebox{\dimexpr1ex-\height}{%
    $\displaystyle
      \raisebox{.5\height}{$\ddots$}\cfrac{#1}{#2}
    $%
  }%
}

\def\cFrac#1#2{%
  \begin{array}{@{}c@{}}\multicolumn{1}{c|}{#1}\\%
  \hline\multicolumn{1}{|c}{#2}\end{array}\;}

\newcommand{\KK}{\mathbb{K}}

\DeclareMathOperator{\dd}{dd}
\DeclareMathOperator{\des}{des}
\DeclareMathOperator{\asc}{asc}
\DeclareMathOperator{\da}{da}
\DeclareMathOperator{\pk}{pk}
\DeclareMathOperator{\val}{val}

\usepackage{tikz}

\usepackage{epsf}

\usepackage[english]{babel}

\title[Hankel determinants of the Euler numbers]{Hankel Continued 
fractions and Hankel determinants of the Euler numbers}

\author{Guo-Niu HAN}
\address{I.R.M.A., UMR 7501, Universit\'e de Strasbourg
et CNRS, 7 rue Ren\'e Descartes, F-67084 Strasbourg, France}
\email{guoniu.han@unistra.fr}

\subjclass[2010]{05A05, 05A10, 05A19, 11B68, 11C20, 30B70}

\keywords{Jacobi continued fraction, Hankel continued fraction, 
Hankel determinant, Euler numbers, tangent numbers, secant numbers, 
peak statistic}

\date{October 9, 2019}

\begin{document}

\begin{abstract} 
The Euler numbers occur in the Taylor expansion of $\tan(x)+\sec(x)$.
Since Stieltjes,
continued fractions and Hankel determinants of the even Euler numbers, on the one hand,
of the odd Euler numbers, on the other hand,
have been
widely studied separately.
However, no Hankel determinants of the (mixed) Euler numbers  have 
been obtained.
The reason for that is that some Hankel determinants of the 
Euler numbers are null. 
This implies that the Jacobi continued fraction of the 
Euler numbers does not exist. In the present paper, this obstacle 
is bypassed by using the Hankel continued fraction, instead of 
the $J$-fraction.  Consequently, 
an explicit formula for the Hankel determinants of the Euler numbers is being derived, as well as a full list of 
 Hankel continued fractions and Hankel determinants involving 
 Euler numbers. Finally,
a new $q$-analog of the Euler numbers $E_n(q)$ based on our continued fraction is proposed. We obtain an explicit formula for $E_n(-1)$
and prove a conjecture by R. J. Mathar on these numbers.

\end{abstract}

\maketitle

\section{Introduction} 
The {\it Euler numbers} $E_n (n\geq 0)$ are defined by their generating function
\begin{equation}\label{eq:EulerNumbers}
	\tan(x)+\sec(x)=\sum E_n \frac {x^n}{n!}.
\end{equation}
The even (resp. odd) Euler numbers $E_{2n}$ (resp. $E_{2n+1}$)
are also called {\it secant} (resp. {\it tangent}) numbers,
and their first values read:
\begin{equation*}
	\begin{array}{*{13}c}
		n      &=& 0 & 1 & 2 & 3 & 4 & 5 & 6 & 7 & 8 & 9   \\
		E_n  &  =& 1 & 1  &1  &2 & 5  &16& 61& 272 & 1385& 7936  \\
\end{array}%
\end{equation*}
As already proved back in 1879 by Andr\'e \cite{Andre1879},
the Euler numbers count the alternating permutations 
and
satisfy the following recurrence relation 
\begin{equation}\label{eq:En_rec}
	E_0=1;\  E_1=1;\quad
	E_n = \frac {1}{2} \sum_{k=1}^n \binom{n-1}{k-1} E_{k-1} E_{n-k}.
\end{equation}
The Euler numbers have been widely studied in Combinatorics (see \cite{Andre1879, Euler1755, Knuth1967Buckholtz, Nielsen1923, Viennot1982})
and are closely connected with  the Bernoulli and Genocchi numbers
\cite{Viennot1982, AlSalam1959Carlitz}.

\medskip

Stieltjes derived the continued fractions for the ordinary generating
functions of the tangent and secant numbers 
(see \cite{Stieltjes1894Re}, \cite{Flajolet1980}, 
\cite[p. 206, (53.11)]{Wall1948},
\cite[p. 369]{Wall1948}
)
\begin{align}	
	\sum_{n\geq 0} E_{2n} x^{2n} &= 
	\cFrac {1}{1} - \cFrac{1^2x^2}{1}
	- \cFrac{2^2x^2}{1}- \cFrac{3^2x^2}{1} - \cdots \label{eq:E2n} \\
	\sum_{n\geq 0} E_{2n+1} x^{2n+1} &= 
	\cFrac {x}{1} - \cFrac{1\cdot 2x^2}{1}
	- \cFrac{2\cdot 3x^2}{1}- \cFrac{3\cdot 4x^2}{1} - \cdots\label{eq:E2np1}
\end{align}
Thanks to the seminal paper by Flajolet \cite{Flajolet1980} on combinatorial aspects of
continued fractions, the above two continued fractions have 
become very classical, and been generalized
in several directions \cite{Prodinger2011, Fulmek2000, JosuatVerges2013Kim, Shin2010Zeng, Han1999RZeng, JosuatVerges2010}, including their $q$-analogs.

Although a lot of continued fractions 
involving either the tangent numbers $(E_{2n+1})$,
or secant numbers $(E_{2n})$ have been studied {\it separately},
the Jacobi continued fraction and the Hankel determinants of the ordinary generating function $\sum_{n\geq 0} E_n x^n$
for the {\it mixed} Euler numbers have never
been derived. 
The reason for that
is that {\it some} Hankel determinants of the Euler number sequence 
are null. 
This implies that the corresponding Jacobi continued fraction 
does 
not exist. In the present paper, this obstacle is bypassed by using the 
Hankel continued fraction \cite{Han2016Adv}, instead of the $J$-fraction.
Consequently, we derive an explicit formula for 
	the Hankel determinants of the Euler numbers. 
	See Section \ref{sec:ContFrac} for the basic definition and 
	properties of Hankel continued fractions.
We establish the following two theorems about the Hankel continued
fraction and the Hankel determinants of the Euler numbers. As expected,
we see that  some of the Hankel determinants are zero.

\begin{Theorem}\label{th:En_H}
	We have the following Hankel continued fraction expansion of the
	(mixed) Euler numbers:
\begin{align}
	\sum_{n\geq 0} E_n x^n
	&=
	 \cFrac{ 1}{ 1-x} 
	- \cFrac{ x^3}{1-2x-4x^2}
	- \cFrac{ 9x^3}{ 1-5x}
	- \cFrac{ 4x^2}{ 1-7x} \label{eq:En_H}  \\
	&\qquad\qquad
	- \cFrac{ 75x^3}{ 1-6x-36x^2}
	- \cFrac{ 147x^3}{1-11x}
	- \cFrac{ 16x^2}{1-13x} - \cdots \nonumber\\
	&= b_0 
	+ \cFrac{a_1}{b_1}
	+ \cFrac{a_2}{b_2}
	+ \cFrac{a_3}{b_3} + \cdots\nonumber
\end{align}
the general patterns for the coefficients $a_j$ and $b_j$ being: 
\begin{align*}
	a_1&=1 , \\
	a_{3k} &=-(4k - 1)^2(2k - 1)x^3,  \\
	a_{3k+1} &=-4k^2x^2 , \\
	a_{3k+2} &=-(4k + 1)^2(2k + 1)x^3 ; \\
b_0&=0 , \\
	b_{3k} &= -(6k - 1)x + 1 , \\
	b_{3k+1} &= -(6k + 1)x + 1,  \\
	b_{3k+2} &= -4(2k + 1)^2x^2 - 2(2k + 1)x + 1 .
\end{align*}
	\end{Theorem}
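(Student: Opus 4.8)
The plan is to invoke the existence–and–uniqueness theory of Hankel continued fractions recalled in Section~\ref{sec:ContFrac}. By that theory it suffices to exhibit, in closed form, the \emph{complete remainders} of the Hankel continued fraction of $F(x):=\sum_{n\ge 0}E_nx^n$, to check the recursion linking consecutive remainders, and to verify that the proposed coefficients $a_j,b_j$ obey the valuation-and-degree (admissibility) conditions that characterise a Hankel continued fraction, so that uniqueness pins them down. Concretely, set $R_1:=1/F$ and $R_{j+1}:=a_{j+1}/(R_j-b_j)$ for $j\ge 1$; then $R_{j}=b_{j}+a_{j+1}/R_{j+1}$ formally and $F=1/(b_1+a_2/(b_2+\cdots))$ since $b_0=0$, $a_1=1$. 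One must prove that every $R_j$ is a power series with $R_j(0)=1$, that the defect $R_j-b_j$ has valuation exactly $\val(a_{j+1})$ with matching leading coefficient, and that $\deg b_j\le 2$ and $\val(a_{j+1})\le 3$ stay in the admissible range; uniqueness then forces $a_j,b_j$ to be those of \eqref{eq:En_H}.

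To produce the remainders I would exploit the decomposition $F(x)=S(x^2)+x\,T(x^2)$, where $S(u)=\sum_{n\ge0}E_{2n}u^n$ and $T(u)=\sum_{n\ge0}E_{2n+1}u^n$ are the secant and tangent generating functions whose Stieltjes continued fractions are \eqref{eq:E2n} and \eqref{eq:E2np1}. Writing $S^{(m)},T^{(m)}$ for the $m$-th tails of these $S$-fractions, one has the Riccati relations $1/S^{(m-1)}=1-m^2u\,S^{(m)}$ and $1/T^{(m-1)}=1-m(m+1)u\,T^{(m)}$, with $S^{(0)}=S$, $T^{(0)}=T$. Computing $R_1,R_2,R_3,\dots$ explicitly for small indices and then extrapolating, I expect the three families $R_{3k+1},R_{3k+2},R_{3k+3}$ to be fixed rational expressions in $S^{(k)}(x^2)$, $T^{(k)}(x^2)$ (and neighbouring tails), in $x$, and in a polynomial of degree $\le 2$, the period $3$ in \eqref{eq:En_H} reflecting the interleaving of the two $S$-fractions with one cycle advancing each tail index by one. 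An appealing alternative, which I would also try, is to isolate a general ``even–odd mixing'' lemma expressing the Hankel continued fraction of an arbitrary $\phi(x^2)+x\psi(x^2)$ through an interleaving-and-partial-contraction of the $S$-fractions of $\phi$ and $\psi$, and then to specialise it to $\phi=S$, $\psi=T$ with coefficients $m^2$ and $m(m+1)$.

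Granting the closed forms, proving \eqref{eq:En_H} reduces to verifying the three identities $R_{3k+r}=b_{3k+r}+a_{3k+r+1}/R_{3k+r+1}$ ($r=1,2,3$) as identities of formal power series, by induction on $k$: base case $k=0$ by the computation above, inductive step by the Riccati relations plus elementary polynomial algebra; this simultaneously forces out $a_{3k}=-(4k-1)^2(2k-1)x^3$, $a_{3k+1}=-4k^2x^2$, $a_{3k+2}=-(4k+1)^2(2k+1)x^3$ and the companion formulas for the $b_j$. I expect the main obstacle to be the exceptional degree-$2$ block $b_{3k+2}=1-2(2k+1)x-4(2k+1)^2x^2$ with the cubic numerators $a_{3k+2},a_{3k+3}$ flanking it: this block is forced precisely because, as announced in the introduction, certain Hankel determinants of the Euler numbers vanish, so no ordinary degree-$1$ step is possible there and two steps must be amalgamated. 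Checking that the amalgamated denominator acquires exactly the coefficients $-2(2k+1)$ and $-4(2k+1)^2$, that the surrounding numerators have valuation $3$ as the admissibility condition for an exceptional block demands, and that the defects $R_{3k+1}-b_{3k+1}$, $R_{3k+2}-b_{3k+2}$ do not vanish to higher order than expected, is the delicate point. Once all admissibility conditions hold, uniqueness of the Hankel continued fraction (Section~\ref{sec:ContFrac}) gives \eqref{eq:En_H}, and the explicit Hankel determinant formula follows from the general product formula over the $a_j$.
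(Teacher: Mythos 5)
Your plan sets up the right formal framework (by Theorem \ref{th:super2} the super $2$-fraction is unique, so it suffices to exhibit admissible remainders), but the mathematical core is missing. You never produce the closed forms of the remainders $R_{3k+1},R_{3k+2},R_{3k+3}$, nor the ``even--odd mixing'' lemma; you only predict that, after computing a few cases and extrapolating, they will turn out to be fixed rational expressions in the tails $S^{(k)}(x^2)$, $T^{(k)}(x^2)$. Everything substantive --- the three per-period identities whose verification you defer to ``the Riccati relations plus elementary polynomial algebra'' --- is exactly the content of the theorem, and there is no a priori reason the remainders of the mixed series $S(x^2)+x\,T(x^2)$ should be rational of bounded complexity in the two families of tails. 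What you are hoping for is in effect an addition formula combining the two Stieltjes fractions \eqref{eq:E2n} and \eqref{eq:E2np1}, and precisely this is flagged in the paper (Remark 3) as the obstacle: no such addition formula is available, which is why the mixed Hankel continued fraction had not been derived before. As written, then, your text is a programme rather than a proof; it might be completable by a guess-and-verify induction, but the guessing step is the whole difficulty and is not carried out, and the delicate points you yourself identify (the degree-$2$ blocks $b_{3k+2}$ and the exact valuations of the flanking cubic numerators) are exactly where an unlucky ansatz would collapse.

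For comparison, the paper avoids the even/odd splitting altogether. It first proves the combinatorial identity $E_n=P_n(1/2,0)$ (Theorem \ref{th:EnComb}), specializes Flajolet's continued fraction for the quadruple statistic $(\val,\pk,\da,\dd)$ to obtain the single continued fraction \eqref{eq:Enxn_1pFraction} for $\sum E_n x^n$, with numerators $-\binom{k}{2}x^2$ and denominators $1-kx$; that fraction is neither a super $1$- nor a super $2$-fraction, so it is converted by repeated chop contractions into the super $1$-fraction \eqref{eq:En_super1} of Theorem \ref{th:En_super1}, and finally one even contraction yields \eqref{eq:En_H}, which is checked to be a super $2$-fraction with $(k_0,k_1,k_2,\ldots)=(0,1,0)^*$. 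If you wish to salvage your route, you must actually write down the conjectured remainders and run the induction; otherwise the contraction route is the one that works.
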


Throughout this paper we use the following convention, called ``{\it index convention}", 
saying that each expression given by cases is only valid for integers which have not been considered as previous special values.
In the above example,
the expression $b_{3k+1}$ is valid for $k\geq 0$; However, 
the expression $a_{3k+1}$ is valid for $k\geq 1$, but not for $k=0$, since $a_1$ is already listed above. 

\medskip

The {\it Hankel determinant} of order $n$
of the formal power series $f(x)=c_0+c_1x+c_2x^2+\cdots$ 
(or of the sequence $(c_0,c_1,c_2, \cdots$)  is defined by 
\begin{equation*}
	H_n(c_0, c_1, c_2, \ldots):=\det (c_{i+j})_{0\leq i,j\leq n-1}
\end{equation*}
for $n\geq 1$, and  $H_0(c_0, c_1, c_2, \ldots)=1$ if $n=0$.

\begin{Theorem}\label{th:En_Det}
	The Hankel determinants $H_n$ of the (mixed) Euler numbers 
	\begin{equation*}
	(E_0, E_1, E_2, E_3, \ldots)
	\end{equation*}
	are given by the following formulas:
\begin{align*}
	H_0&=1, \\
	H_{4k+1} &= (-1)^{k} \frac{(2k)!^2}{2^{4k(2k-1)}} \prod_{j=1}^{2k-1} (2j+1)!^4, \\
	H_{4k+2}&=0,\\
	H_{4k+3} &= (-1)^{k+1}\frac{(2k+1)!^2}{2^{4k(2k+1)}} \prod_{j=1}^{2k} (2j+1)!^4,\\
	H_{4k+4} &= (-1)^{k+1}\frac{(2k+1)!^2(4k+3)!^2}{2^{2(2k+1)^2}} \prod_{j=1}^{2k} (2j+1)!^4.
\end{align*}
\end{Theorem}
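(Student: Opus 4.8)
The plan is to deduce Theorem~\ref{th:En_Det} from Theorem~\ref{th:En_H} by exploiting the general dictionary between Hankel continued fractions and Hankel determinants recalled in Section~\ref{sec:ContFrac}. Recall that for a Hankel (or $H$-) fraction $b_0 + \cFrac{a_1}{b_1} + \cFrac{a_2}{b_2} + \cdots$ of a series $f$, there is an explicit product formula expressing every Hankel determinant $H_n(f)$ as a monomial in the partial numerators $a_j$ and in the leading coefficients of the partial denominators $b_j$, with multiplicities that depend only on the degrees $\delta_j=\deg b_j$ (here $\delta_j\in\{1,2\}$) and that, crucially, evaluates to $0$ exactly at those indices $n$ that are "skipped over" by a degree-$2$ step. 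So first I would invoke that lemma, feeding in the explicit data from Theorem~\ref{th:En_H}: $a_1=1$; $a_{3k}=-(4k-1)^2(2k-1)x^3$, $a_{3k+1}=-4k^2x^2$, $a_{3k+2}=-(4k+1)^2(2k+1)x^3$; and the leading coefficients $-(6k-1)$, $-(6k+1)$, $-4(2k+1)^2$ of $b_{3k},b_{3k+1},b_{3k+2}$ respectively. The degree pattern $\deg b_{3k}=\deg b_{3k+1}=1$, $\deg b_{3k+2}=2$ is what forces $H_{4k+2}=0$: every third block contributes a degree-$2$ denominator, and by the arithmetic $3\cdot(\text{something})$ these land precisely on the residue class $n\equiv 2\pmod 4$ once one accounts for the running sum $\sum \delta_j$.

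The substance of the argument is then a bookkeeping computation. I would let $n_m=\delta_1+\delta_2+\cdots+\delta_m$ be the partial sums of the degree sequence $(\delta_j)=(1,1,2,1,1,2,\dots)$, so that the "reachable" orders are $n_1=1$, $n_2=2$, $n_3=4$, $n_4=5$, $n_5=6$, $n_6=8,\dots$, i.e.\ exactly $n\equiv 0,1,3\pmod 4$ together with $n=0$, confirming $H_{4k+2}=0$ and giving a closed form for each non-vanishing $H_n$ as $\prod_j a_j^{\,e_j(n)}\cdot(\text{leading coeffs})^{f_j(n)}$ for the appropriate exponents $e_j,f_j$ dictated by the lemma. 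The remaining work is to simplify this product. Grouping the factors three at a time — one degree-$2$ block $a_{3k+2}$ and two degree-$1$ blocks $a_{3k},a_{3k+1}$ sandwiching it — the odd factorials $(2j+1)!^4$ should emerge from telescoping the products of the squared odd numbers $(4k\pm1)^2$ and $4k^2$, while the powers of $2$ and the leading factors $(2k)!^2$, $(2k+1)!^2$, $(4k+3)!^2$ come from the $b_j$-leading-coefficient contributions and from collecting $4^{(2k+1)^2}$-type terms. I would verify the four cases $n=4k+1,4k+3,4k+4$ (and $H_0=1$) by induction on $k$, checking the ratio $H_{4(k+1)+r}/H_{4k+r}$ against the product of the three new $a$- and $b$-factors introduced when passing from block $3k$ to block $3k+3$; this reduces each case to a finite identity among factorials and powers of $2$.

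The main obstacle I anticipate is purely one of organization rather than depth: keeping the index convention straight (the exceptional value $a_1=1$ versus the generic $a_{3k+1}$) so that the telescoping products start at the right place, and correctly matching the exponent functions $e_j(n),f_j(n)$ from the general $H$-fraction/determinant lemma to the specific three-periodic pattern here — a single off-by-one in the multiplicity of a degree-$2$ denominator would corrupt the power of $2$ in $H_{4k+4}$. A secondary check worth isolating as a sub-step is the sign: the signs of all $a_j$ are negative and those of the relevant $b_j$-leading coefficients alternate in a controlled way, so I would compute $\mathrm{sgn}(H_n)$ separately by counting, mod $2$, the total number of negative factors contributed up to order $n$, and confirm it matches $(-1)^k$, $(-1)^{k+1}$, $(-1)^{k+1}$ in the three cases. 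Once the ratio identities and the sign count are in place, the theorem follows.
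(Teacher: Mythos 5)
Your overall strategy---reading the Hankel determinants off the $H$-fraction \eqref{eq:En_H} via the general correspondence recalled in Section~\ref{sec:ContFrac}---is exactly the paper's route (it applies Theorem~\ref{th:super2}(ii) to Theorem~\ref{th:En_H}). However, the ``dictionary'' you invoke is not the correct one, and the error is not cosmetic. In formula \eqref{eq:HankelDetFundamental} the non-vanishing determinants $H_{s_j}$ depend \emph{only} on the numerator constants $v_i$ (here, matching \eqref{eq:En_H} with \eqref{eq:super}: $v_0=1$, $v_{3k}=4k^2$, $v_{3k+1}=(4k+1)^2(2k+1)$, $v_{3k+2}=(4k+3)^2(2k+1)$), on the positions $s_j=k_0+\cdots+k_{j-1}+j$, and on the sign $(-1)^{\epsilon_j}$ with $\epsilon_j=\sum_{i<j}k_i(k_i+1)/2$; the coefficients of the denominators $b_j$ never enter, just as the $u_j$ never enter Heilermann's formula \eqref{eq:detH} for $J$-fractions. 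So there is no product formula ``in the partial numerators $a_j$ and in the leading coefficients of the $b_j$'': if you actually fed in leading-coefficient contributions $-(6k-1)$, $-(6k+1)$, $-4(2k+1)^2$ as you propose for the factorials and powers of $2$, you would be multiplying in quantities the correct formula is independent of, and your sign rule (parity of the number of negative $a$- and $b$-factors) would likewise be wrong; since all $v_i>0$ here, the sign is simply $(-1)^{\epsilon_j}$, i.e.\ the parity of the number of $k_i=1$ steps used so far.

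Your bookkeeping of the vanishing orders also contains an off-by-one that contradicts your own conclusion. The $k_i$ are read from the numerator exponents, giving $(k_0,k_1,k_2,\ldots)=(0,1,0)^*$; equivalently, since in \eqref{eq:En_H} one has $\deg b_j=k_{j-1}+1$ exactly, from the degree sequence $(\deg b_1,\deg b_2,\deg b_3,\ldots)=(1,2,1,1,2,1,1,2,\ldots)$---note the special initial block imposed by the index convention. Hence the reachable orders are $s_j=0,1,3,4,5,7,8,9,11,\ldots$, which is precisely what forces $H_{4k+2}=0$. Your listed partial sums $1,2,4,5,6,8,\ldots$ come from the mis-aligned pattern $(1,1,2)^*$ and would instead predict $H_{4k+3}=0$, contradicting the statement being proved. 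Once the correct triple $(v_i,s_j,\epsilon_j)$ is in place, the remaining simplification is indeed the routine telescoping you describe (and is how the paper concludes), but as written the two central ingredients of your argument---the determinant formula and the position sequence---are incorrect and would not produce the stated formulas.
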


The proofs of Theorems \ref{th:En_H} and \ref{th:En_Det} are given in Section 
\ref{sec:proofs} by making use of the Flajolet continued fraction for 
permutation statistics \cite{Flajolet1980} and some combinatorial models for the Euler numbers described in Section \ref{sec:Comb}.  
We provide a large list of Hankel continued fractions
and Hankel determinants involving Euler numbers. 
The formulas involving the ordinary generating functions of Euler numbers,
most of them stated and proved in Section \ref{sec:ordgf}, 
are resumed in Table~\ref{tab:1}. 
In Section \ref{sec:expgf} we first obtain a continued fraction
of the exponential generating function of a quadruple permutation statistic,
see Theorem \ref{th:Exp_Fraction}.
Then, we derive the Hankel continued fractions and Hankel determinants
for the exponential generating functions involving
the Euler numbers. We resume these formulas in Table \ref{tab:2}.
There, we write $e_n=E_n/n!$ for short.
In the last section, a new $q$-analog of the Euler numbers $E_n(q)$ based on the continued fraction is proposed. We obtain an explicit formula for $E_n(-1)$
and prove a conjecture by R. J. Mathar on these numbers.
\smallskip

\begin{table}
	\caption{Formulas for the ordinary generating functions}\label{tab:1}
	$\begin{array}{|c|c|c|c|}
	\hline
	\text{Sequence} & \text{$H$-Fraction} & \text{Hankel det.}   & 
	              	\text{Exp. g. f.} \\
	\hline
	(E_0, E_1, E_2, E_3, E_4,  \ldots)  & \text{Thm\ref{th:En_H}}, H 
	      & \text{Thm\ref{th:En_Det}} & \tan(x)+\sec(x)\\
		(E_1, E_2, E_3, E_4, \ldots) & \text{(F7)}, J & \text{(H7)} & (\tan(x)+\sec(x))'\\
	(E_2, E_3, E_4,  \ldots)  & \text{(F10)}, H & \text{(H10)}& (\tan(x)+\sec(x))''\\
	(E_0, 0, E_2, 0, E_4, 0, \ldots)  & \text{(F1)}, J & \text{(H1)} & \sec(x)\\
	(0, E_2, 0, E_4, 0, \ldots)  & \text{(F8)}, H & \text{(H8)} &\sec(x)'\\
	(E_0, E_2, E_4, E_6, \ldots)  & \text{(F2)},J & \text{(H2)} & - \\
	(E_2, E_4, E_6,\ldots)  & \text{(F3)},J & \text{(H3)}  & - \\
	(0, E_1, 0, E_3, 0, E_5, \ldots)  & \text{(F9)}, H & \text{(H9)} & \tan(x) \\
	(E_1, 0, E_3, 0, E_5, \ldots)  & \text{(F4)}, J & \text{(H4)} & \tan(x)' \\
	(0, E_3, 0, E_5, \ldots)  & \text{(F11)},H & \text{(H11)} & \tan(x)'' \\
		(E_1, E_3,  E_5, E_7,\ldots)  & \text{(F5)},J & \text{(H5)} & - \\
	(E_3,  E_5, E_7, \ldots)  & \text{(F6)},J & \text{(H6)} & - \\
	\hline
\end{array}$
\end{table}
\begin{table}
	\caption{Formulas for the exponential generating functions}\label{tab:2}
	$\begin{array}{|c|c|c|c|}
	\hline
	\text{Sequence} & \text{$H$-Fraction} & \text{Hankel det.}   & 
	              	\text{Function} \\
	\hline
	(e_0, e_1, e_2, e_3, e_4, e_5,\ldots) & \text{(F17)}, J & \text{(H17)} & \tan(x)+\sec(x)\\
	(e_1, e_2, e_3, e_4, e_5, \ldots) & \text{(F16)}, J & \text{(H16)} & (\tan(x)+\sec(x)-1)/x\\
	(e_2, e_3, e_4, e_5, \ldots)  & \text{(F18)}, J & \text{(H18)}& \cdots \\
	(e_3, e_4, e_5, \ldots)  & \text{(F19)}, J & \text{(H19)}& \cdots \\
	(e_4, e_5, \ldots)  & \text{(F20)}, J & \text{(H20)}& \cdots \\
	(0, e_1, 0, e_3, 0, e_5, \ldots)  & \text{(F15)}, H & \text{(H15)} & \tan(x) \\
		(e_1, 0, e_3, 0, e_5, \ldots)  & \text{(F12)}, J & \text{(H12)} & \tan(x)/x \\
		(0, e_3, 0, e_5, \ldots)  & \text{(F14)}, H & \text{(H14)} &  (\tan(x)-x)/x^2 \\
		(e_3, 0, e_5, \ldots)  & \text{(F21)}, J & \text{(H21)} &  \cdots \\
		(e_1, e_3,  e_5, e_7,e_9,\ldots)  & \text{(F13)}, J & \text{(H13)} 
						 & \tan(\sqrt{x})/\sqrt{x}\\
		(e_3,  e_5, e_7,e_9, \ldots)  & \text{(F22)}, J & \text{(H22)} & (\tan(\sqrt{x})/\sqrt{x}-1)/x \\
	(e_5, e_7, e_9, \ldots)  & \text{(F23)}, J & \text{(H23)} & \cdots \\
	(e_7, e_9, \ldots)  & \text{(F24)}, J & \text{(H24)} & \cdots \\
	\hline
\end{array}$
\end{table}

Some further remarks are in order for a better understanding of our 
motivation, as well as various methods and notation used in this paper.

\smallskip

{\it Remark 1}. Some continued 
fractions obtained in the paper are of {\it Jacobi} type, 
as well as others need to be expressed as Hankel continued fractions. These two situations are indicated by the letters
``$J$" or ``$H$" in the second column in Tables~\ref{tab:1} and \ref{tab:2}. 

\smallskip

{\it Remark 2}. Some of these formulas are known or easy to prove.
We list them here for a quick view and comparison. In fact, we can find
(H1) in \cite{AlSalam1959Carlitz},
(H2) and (H3) for the case $r=1$ in \cite{AlSalam1959Carlitz, Radoux1992},  \cite[(3.52-53)]{Krattenthaler1998}   and \cite[(4.58-59)]{Milne2002},
(H22) and (H23)  in \cite{Morales2017PakII},
(F9) in \cite[(3.120)]{Milne2002}, [F16] in \cite[(3.9)]{Sokal2019EM}.

\smallskip

{\it  Remark 3}. The structure of the continued fractions makes 
no simple formula for the addition of two simple continued fractions.
In fact, our main result says that the sum of the two continued fractions
\eqref{eq:E2n} and \eqref{eq:E2np1} is equal to \eqref{eq:En_H}. This is not easy to prove since no addition formula is available.
In the same way, we know the continued fractions for $\tan(x)+\sec(x)$ and also $\tan(x)$, but we do not know the continued fraction for their difference $\sec(x)$.

\smallskip

{\it Remark 4}. There are simple continued fractions for
$ (E_0, 0, E_2, 0, E_4, \ldots)$
and
$ (E_0, E_2, E_4,  \ldots)$. However,
no simple continued fractions for 
$ (e_0, 0, e_2, 0, e_4, \ldots)$
and
$ (e_0, e_2, e_4,  \ldots)$ are known.
We are convinced that there is no unified method to derive continued fractions for 
ordinary generating functions and exponential generating functions.

\smallskip

{\it Remark 5}. For the exponential generating functions, the three 
families in Table~\ref{tab:2} 
seem to be naturally extended.  
However, for the ordinary generating functions, the five families 
cannot grow,
and all continued fractions of simple form are listed in 
Table~\ref{tab:1}.

\smallskip

{\it Remark 6}. In view of Remark 5, we may think that the exponential generating functions are more adequate for the Hankel continued fractions. However,
Remark~4 says the converse. 

\smallskip

{\it Remark 7}. Theorem \ref{th:En_H} and (F10) are very similar. 
However there is an important difference.
The super $1$-fraction of $\sum E_n x^n$ exists. This fact leads us
to find the proof of Theorem \ref{th:En_H}.
But there is no simple super $1$-fraction of $\sum E_{n+2} x^n$. 
Fortunately, now we have the proof of Theorem \ref{th:En_H} which gives us some indication for the proof of (F10).


\section{Definitions and properties of the Continued fractions} \label{sec:ContFrac}
In this section we recall some basic definitions and properties of
the general continued fractions and the super continued fractions,
including the Hankel continued fractions.

Let $\mathbb{K}$ be a field. In most cases, $\mathbb{K}$ 
will be the field  $\mathbb{Q}$ of rational numbers. 
Consider the field of fractional fractions $\mathbb{K}(x)$.
Let $\mathbf{a}=(a_1, a_2, \ldots)$ and
$\mathbf{b}=(b_0, b_1, b_2, \ldots)$ be two sequences of $\KK(x)$.
The {\it generalized continued fraction} associated with the two sequences $\mathbf{a}$ and $\mathbf{b}$ can be written by using the natural notation 
\begin{equation*}
	b_0 + \cfrac{a_1}{
		b_1 + \cfrac{a_2}{
			b_2 + \cfrac{a_3}{
				b_3 
					+ \lastcfrac{}{}
			}}
		}
\end{equation*}
or Pringsheim's notation:
\begin{equation}\label{eq:GCF}
b_0 +  \cFrac{a_1}{b_1} 
+ \cFrac{a_2}{b_2} 
+ \cFrac{a_3}{ b_3} 
+ {\cdots}
\end{equation}
For the notion about {\it value, partial numerator and denominator, fundamental recurrence formulas, 
equivalence transformations} of continued fractions, see \cite[p. 13-19]{Wall1948}.
The {\it value} of the above continued fraction is a formal power 
series in $x$ with coefficients in $\mathbb K$. 
Throughout the paper
we will extensively use 
the following contraction formulas.
The contractions with for the case $b_j=1$ are well-known \cite{Wall1948, Perron1957II, Stieltjes1894Re}.

\begin{Theorem}
The generalized continued fraction defined
	in \eqref{eq:GCF} has the following contraction formulas. 

	(1) Even contraction:
	\begin{align}
		&b_0 +  \cFrac{a_1b_2}{b_1b_2+a_2} 
-  \cFrac{a_2a_3b_4}{b_2b_3b_4+a_4b_2+a_3b_4} 
-  \cFrac{a_4a_5b_2b_6}{b_4b_5b_6+a_6b_4+a_5b_6} 
- {\cdots}\label{eq:EvenContraction}\\
		&=  b_0' + \cFrac {a_1'}{b_1'} 
		+ \cFrac{a_2'}{b_2'} 
		+ \cFrac{a_3'}{b_3'} 
		+ \cdots \nonumber 
\end{align}
The general patterns for the new coefficients $a_j'$ and $b_j'$ are:
\begin{align*}
	a_1' &=  a_1 b_2,   \cr
	a_2' &=  - a_{2}a_{3} { b_4}, \cr
	a_j' &=  - a_{2j-2} a_{2j-1} {b_{2j-4}} {  b_{2j}};\cr
	b_0'  &=  b_0, \cr
	b_1' &=  b_1b_2+a_2, \\
	b_j' &=   b_{2j-2} b_{2j-1} b_{2j} + a_{2j} b_{2j-2} +  a_{2j-1} b_{2j}.
\end{align*}
When $b_0=0$ and $b_1=b_2=\cdots=1$, the even contraction formula becomes
\begin{equation}
\cFrac{a_1}{1+a_2} 
-  \cFrac{a_2a_3}{1+a_3+a_4} 
-  \cFrac{a_4a_5}{1+a_5+a_6} 
- {\cdots}
\end{equation}

	(2) Odd contraction:
\begin{equation}
	\frac{b_0b_1+a_1}{b_1} 
	- \cFrac{a_1a_2b_3/b_1}{b_1b_2b_3+a_3b_1+a_2b_3} 
-  \cFrac{a_3a_4b_1b_5}{b_3b_4b_5+a_5b_3+a_4b_5} 
- {\cdots}
\end{equation}
The general patterns for the new coefficients $a_j'$ and $b_j'$ are:
\begin{align*}
	a_1' &= - a_1a_2 \frac {b_{3}}{b_{1}},   \cr
	a_2' &=  - a_{3}a_{4} { b_{1}  b_{5}},\cr
	a_j' &=  - a_{2j-1}a_{2j} {b_{2j-3}} {  b_{2j+1}}; \cr
	b_0'  &=  \frac {b_0b_1+a_1}{b_1},  \cr
	b_1' &=  b_1b_2b_3 + a_3b_1 + a_2b_3, \\
	b_j' &=   b_{2j-1} b_{2j} b_{2j+1} + a_{2j+1} b_{2j-1} +  a_{2j} b_{2j+1}.
\end{align*}
When $b_0=0$ and $b_1=b_2=\cdots=1$, the odd contraction formula becomes
\begin{equation}
	{a_1}
	- \cFrac{a_1a_2}{1+a_2+a_3} 
-  \cFrac{a_3a_4}{1+a_4+a_5} 
- {\cdots}
\end{equation}

	(3) Chop contraction:
	\begin{equation}\label{eq:chop}
\frac{b_0b_1+a_1}{b_1} - \cFrac{a_1a_2/b_1}{b_1b_2+a_2} +
\cFrac{a_3b_1}{b_3} + \cFrac{a_4}{b_4} +  \cFrac{a_5}{b_5} \cdots
\end{equation}

	(4) Haircut contraction: for each $\alpha\not= a_1/b_1$,
\begin{equation}\label{eq:Haircut}
	\alpha + \cFrac{a_1-\alpha b_1}{b_1} 
	+ \cFrac{a_1a_2}{b_2a_1-b_1b_2\alpha -a_2\alpha} +
	\cFrac{a_3(a_1-b_1\alpha)}{b_3} 
+	\cFrac{a_4}{b_4} 
	\cdots
\end{equation}

\end{Theorem}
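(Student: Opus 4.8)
The plan is to derive all four formulas from the fundamental recurrence formulas for the convergents, establishing each as an identity in $\KK(x)$ between the rational functions obtained by truncating the two continued fractions at a common level; the asserted equality of values then follows whenever those values, as formal power series, are defined (which is automatic in every later application). Write $p_{-1}=1$, $p_0=b_0$, $q_{-1}=0$, $q_0=1$ and $p_n=b_np_{n-1}+a_np_{n-2}$, $q_n=b_nq_{n-1}+a_nq_{n-2}$ for the numerators and denominators of the convergents of \eqref{eq:GCF}, so that the $n$th convergent is $p_n/q_n$. The case $b_1=b_2=\cdots=1$ being classical, the whole issue is to carry arbitrary weights $b_j$ through the computation, and the one delicate point will be the behaviour at the top of the fraction, where the non-uniform initial conditions $p_{-1}=1$, $p_0=b_0$ force the first few new coefficients ($a_1',a_2',b_1'$ for the even contraction; $a_1',b_0'$ for the odd one) to deviate from the general pattern.

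First I would dispatch the chop and haircut formulas \eqref{eq:chop} and \eqref{eq:Haircut}, since once the relevant tail is frozen they become finite rational identities. Let $t=\cFrac{a_3}{b_3}+\cFrac{a_4}{b_4}+\cdots$ denote the depth-three tail of \eqref{eq:GCF}; since $\cFrac{a_3b_1}{b_3}+\cFrac{a_4}{b_4}+\cdots=b_1t$, the chop contraction is equivalent to
\begin{equation*}
  b_0+\cFrac{a_1}{b_1}+\cFrac{a_2}{b_2+t}
  =\frac{b_0b_1+a_1}{b_1}-\cFrac{a_1a_2/b_1}{b_1b_2+a_2+b_1t}\qquad(b_1\neq0),
\end{equation*}
which follows by clearing denominators. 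Similarly, for a fraction with $b_0=0$, the haircut contraction is equivalent to
\begin{equation*}
  \cFrac{a_1}{b_1}+\cFrac{a_2}{b_2+t}
  =\alpha+\cFrac{a_1-\alpha b_1}{b_1}+\cFrac{a_1a_2}{b_2a_1-b_1b_2\alpha-a_2\alpha+(a_1-b_1\alpha)t},
\end{equation*}
again immediate by cross-multiplication; here the hypothesis $\alpha\neq a_1/b_1$ is exactly what makes $a_1-\alpha b_1\neq 0$, so the right-hand continued fraction does not degenerate.

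For the even contraction I would eliminate the odd-indexed convergents. Substituting $p_{2n-3}=(p_{2n-2}-a_{2n-2}p_{2n-4})/b_{2n-2}$ (from the recurrence at index $2n-2$) into the recurrences at indices $2n-1$ and $2n$ gives
\begin{equation*}
  b_{2n-2}\,p_{2n}=\bigl(b_{2n-2}b_{2n-1}b_{2n}+a_{2n}b_{2n-2}+a_{2n-1}b_{2n}\bigr)p_{2n-2}-a_{2n-2}a_{2n-1}b_{2n}\,p_{2n-4},
\end{equation*}
and likewise for $q$. Setting $\tilde p_n:=\gamma_n p_{2n}$, $\tilde q_n:=\gamma_n q_{2n}$ with $\gamma_0=\gamma_1=1$ and $\gamma_n=b_2b_4\cdots b_{2n-2}$ for $n\geq2$ removes the factor $b_{2n-2}$ and turns this into $\tilde p_n=b_n'\tilde p_{n-1}+a_n'\tilde p_{n-2}$ with the coefficients $a_n',b_n'$ of the statement; checking the cases $n=0,1,2$ directly, where $p_0=b_0$, $p_1=b_0b_1+a_1$ and $p_2=b_0(b_1b_2+a_2)+a_1b_2$ are inserted by hand, produces the anomalous values $b_0'=b_0$, $b_1'=b_1b_2+a_2$, $a_1'=a_1b_2$, $a_2'=-a_2a_3b_4$. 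Since $(\tilde p_n,\tilde q_n)$ now satisfy the recurrence and the initial conditions of the convergents of the right-hand side of \eqref{eq:EvenContraction}, and $\tilde p_n/\tilde q_n=p_{2n}/q_{2n}$, formula (1) follows. (Conceptually this is the regrouping of the $2\times2$ transfer matrices of \eqref{eq:GCF} into consecutive pairs, the scalars $\gamma_n$ renormalizing each block; but tracking the recurrence is the cleanest way to pin down the exact coefficients.) The odd contraction is the mirror-image computation: eliminate the even-indexed convergents via $p_{2n-2}=(p_{2n-1}-a_{2n-1}p_{2n-3})/b_{2n-1}$, rescale the odd-indexed convergents by $\delta_0=1/b_1$, $\delta_1=1$, $\delta_n=b_3b_5\cdots b_{2n-1}$ $(n\geq2)$, and read off $b_0'=(b_0b_1+a_1)/b_1=p_1/q_1$ and $a_1'=-a_1a_2b_3/b_1$ from the cases $n=0,1$; this gives (2). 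The stated specializations with $b_0=0$, $b_1=b_2=\cdots=1$ are then immediate.

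The only genuine obstacle here is the bookkeeping just described: getting the rescalings $\gamma_n,\delta_n$ and the handful of anomalous leading coefficients to come out exactly as listed — which is precisely why the statement separates out $a_1$ (and $a_1',a_2',b_1'$, resp.\ $a_1',b_0'$). Two small points deserve a word. First, the intermediate steps of (1)--(2) and the explicit factors in (3)--(4) involve dividing by the $b_j$'s; this is harmless in the formal-power-series setting used in the sequel, where every $b_j$ has nonzero constant term, and in general one observes that an identity such as \eqref{eq:EvenContraction} is polynomial in the $a_j,b_j$ and so may be verified after inverting the $b_j$'s. Second, since all identities are proved at the level of truncated convergents in $\KK(x)$, they pass to the values of the continued fractions as soon as those values are defined.
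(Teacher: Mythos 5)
Your proposal is correct. Note that the paper itself gives no proof of this theorem: it merely cites the classical literature (Wall, Perron, Stieltjes) for the even and odd contractions and remarks that the chop and haircut formulas ``can be verified directly.'' Your argument supplies exactly what those citations stand for: eliminating the odd- (resp.\ even-) indexed convergents from the fundamental recurrence and absorbing the leftover factors $b_{2n-2}$ (resp.\ $b_{2n-1}$) into the rescalings $\gamma_n$ (resp.\ $\delta_n$) is the standard classical derivation of \eqref{eq:EvenContraction} and of the odd contraction, and your tail-variable cross-multiplications for \eqref{eq:chop} and \eqref{eq:Haircut} are precisely the direct verification the paper has in mind. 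The bookkeeping checks out, including the anomalous leading coefficients $a_1',a_2',b_1'$ (resp.\ $a_1',b_0'$) and your correct observation that the haircut identity, as printed, concerns a fraction with $b_0=0$ — which is consistent with how it is applied to (F7) in the proof of (F10).
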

\medskip

The even and odd contraction formulas are very classical (see 
\cite[p. 21]{Wall1948}, 
\cite[p. 12-13]{Perron1957II}, \cite[p. J3]{Stieltjes1894Re}). The chop and haircut formulas
can be verified directly. For the most general
contraction and extension, 
see \cite[p. 10-16]{Perron1957II}. \footnote{\; There were some errors 
in the first edition of the book by Perron \cite[p. 199]{Perron1913}, 
when the author derives the formula for
the general contraction. 
These errors
had been fixed in the second edition \cite[p. 11]{Perron1957II}}

\medskip

Let $\mathbf{u}=(u_1, u_2, \ldots)$ and $\mathbf{v}=(v_0, v_1, v_2, \ldots)$
be two sequences.
Recall that the {\it Jacobi continued fraction} attached to $(\mathbf{u}, \mathbf{v})$, 
or {\it $J$-fraction}, for short, is a continued
fraction of the form
\begin{equation}\label{eq:Jacobi}
f(x)=
	\cFrac{v_0 }{ 1 + u_1 x} 
	- \cFrac{v_1 x^2 }{ 1+u_2x} 
	- \cFrac{v_2 x^2 }{ 1 + u_3x} 
	- \cdots
\end{equation}
The basic properties on $J$-fractions can be found in
\cite{Krattenthaler1998, Krattenthaler2005, Flajolet1980, Wall1948, Stieltjes1894Re, Heilermann1846, Gessel2006Xin}. We emphasize the fact that
the Hankel determinants can be calculated
from the $J$-fraction by means of the following  fundamental relation, first
stated by Heilermann in 1846 \cite{Heilermann1846}:
\begin{equation}\label{eq:detH}
H_n(f)
= v_0^n v_1^{n-1} v_2^{n-2} \cdots v_{n-2}^2 v_{n-1}. 
\end{equation}

The Hankel determinants of a power series $f$ can be calculated by the 
above
fundamental relation 
if the $J$-fraction exists, which is equivalent to the fact that all Hankel determinants
of $f$
are nonzero.
If some of the Hankel determinants are zero,
we must use the
{\it Hankel continued fraction} ({\it $H$-fraction}, for short)
whose existence and uniqueness are guaranteed without any condition for the power series.
The Hankel determinants can also be evaluated by using the Hankel continued fraction. Let us recall the basic definition and properties of the Hankel continued fractions~\cite{Han2016Adv}.

\begin{Definition}\label{def:super}
For each positive integer $\delta$, a {\it super continued fraction} associated with $\delta$, called {\it super $\delta$-fraction} for short, is defined to be a continued fraction of the following form
\begin{equation}\label{eq:super}
F(x)=
\cFrac{v_0 x^{k_0}}{ 1+u_1(x) x} 
- \cFrac{ v_1 x^{k_0+k_1+\delta}}{ 1+u_2(x)x}
- \cFrac{ v_2 x^{k_1+k_2+\delta}}{ 1+u_3(x)x}
- {\cdots} 
\end{equation}
where $v_j\not=0$ are constants, $k_j$ are nonnegative integers and $u_j(x)$ are polynomials of 
degree less than or equal to $k_{j-1}+\delta-2$. By convention, $0$ is of degree $-1$.
\end{Definition}

When $\delta=1$ (resp. $\delta=2$) and all $k_j=0$, 
the super $\delta$-fraction \eqref{eq:super} 
is the traditional $S$-fraction (resp. $J$-fraction). 
A super $2$-fraction is called {\it Hankel continued fraction}.

\begin{Theorem}\label{th:super2}
(i) Let $\delta$ be a positive integer.
Each super $\delta$-fraction defines
a power series, and conversely, for each power series $F(x)$,
the super $\delta$-fraction expansion of $F(x)$ exists
and is unique.
\smallskip
(ii) Let $F(x)$ be a power series such that its $H$-fraction 
	is given by \eqref{eq:super} with $\delta=2$. 
Then, all non-vanishing Hankel determinants of $F(x)$ are given by
	\begin{equation}\label{eq:HankelDetFundamental}
		H_{s_j}(F(x))= (-1)^{\epsilon_j} v_0^{s_j} v_1^{s_j-s_1} v_2^{s_j-s_2} \cdots v_{j-1}^{s_j-s_{j-1}},
	\end{equation}
where $\epsilon_j = \sum_{i=0}^{j-1} {k_i(k_i+1)/2}$
and
$s_j=k_0+k_1+\cdots + k_{j-1}+j$ for every $j\geq 0$.
\end{Theorem}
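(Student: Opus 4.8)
The plan is to prove Theorem \ref{th:super2} in two essentially independent parts, treating existence/uniqueness (part (i)) and the Hankel determinant formula (part (ii)) separately, with part (i) serving as the foundation for part (ii). For part (i), I would argue by induction on the number of terms extracted from the power series. Given a power series $F(x)$ with $F(x) = v_0 x^{k_0} + \text{(higher order terms)}$, $v_0 \neq 0$, the integer $k_0$ and the constant $v_0$ are forced. One then needs to show there is a unique choice of polynomial $u_1(x)$ of degree $\le k_0 + \delta - 2$ and a unique power series $G(x)$ with nonzero lowest-order coefficient such that
\begin{equation*}
F(x) = \cFrac{v_0 x^{k_0}}{1 + u_1(x)x - x^{k_0+\delta} G(x)},
\end{equation*}
i.e. $1 + u_1(x)x - x^{k_0+\delta}G(x) = v_0 x^{k_0}/F(x)$. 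Writing out the right-hand side as a power series, the coefficients of $x^0, x^1, \ldots, x^{k_0+\delta-1}$ must match $1 + u_1(x)x$, which determines $u_1(x)$ uniquely (here the degree bound $k_0+\delta-2$ on $u_1$ is exactly what makes $u_1(x)x$ reach degree $k_0+\delta-1$); the remaining tail, divided by $x^{k_0+\delta}$, is $G(x)$, whose lowest term is nonzero by the minimality already accounted for, and whose valuation $k_1$ feeds the next step. Iterating and invoking the uniqueness at each stage gives both existence and uniqueness of the super $\delta$-fraction. The convergence (that the continued fraction, as a formal object, defines exactly $F(x)$) follows because each truncation agrees with $F(x)$ to successively higher order.

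For part (ii), the strategy is to connect the $H$-fraction to a $J$-fraction at the levels where the Hankel determinants do not vanish, and then apply Heilermann's formula \eqref{eq:detH}. The indices $s_j = k_0 + k_1 + \cdots + k_{j-1} + j$ are precisely the orders at which $H_{s_j}(F) \neq 0$; between consecutive such indices the Hankel determinants vanish. The key technical tool is a determinantal identity describing how Hankel determinants transform under the "tail extraction" step performed in part (i): passing from $F(x)$ to $G(x)$ via $F(x) = v_0 x^{k_0}/(1 + u_1(x)x - x^{k_0+\delta}G(x))$ should relate $H_n(F)$ to $H_{n - s_1}(G)$ up to an explicit factor involving a power of $v_0$ and a sign. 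Specifically, one expects something like $H_{s_j}(F) = \pm v_0^{\,s_j} \cdot (\text{const}) \cdot H_{s_j - s_1}(G)$; unwinding this recursion down to $G$ being (eventually) a unit gives the product formula $v_0^{s_j} v_1^{s_j - s_1} \cdots v_{j-1}^{s_j - s_{j-1}}$, and tracking the signs produces the exponent $\epsilon_j = \sum_{i=0}^{j-1} k_i(k_i+1)/2$. The sign contribution $k_i(k_i+1)/2$ at step $i$ is the signature of the reversal permutation on a block of size $k_i$ (or rather, the sign picked up when a $k_i$-shift in indices forces an anti-diagonal block into a Hankel matrix), which is a standard combinatorial fact.

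The main obstacle will be establishing the precise determinantal transformation law under one step of the extraction, including getting the sign exactly right. The case $\delta = 1, k_j = 0$ (ordinary $S$-fraction) and $\delta = 2, k_j = 0$ (ordinary $J$-fraction, where Heilermann applies directly) are classical, but here the $k_j$ can be positive, which means the "block" being skipped has genuine internal structure: the relevant minor of the Hankel matrix of $F$ is not simply a smaller Hankel matrix of $G$ but a bordered one, and one must show the border contributes only the claimed scalar. I would handle this by a careful block-matrix manipulation — writing the Hankel matrix $(c_{i+j})_{0 \le i,j \le s_j - 1}$ of $F$, using the relation $v_0 x^{k_0} = F(x)(1 + u_1(x)x - x^{k_0+\delta}G(x))$ to express its entries, and performing row/column operations (essentially multiplying by the matrix of the power series $1 + u_1(x)x$, which is unitriangular, then by the shift coming from $x^{k_0+\delta}G(x)$) to reduce to a determinant that is block-anti-triangular with one block the identity (size $k_0$, up to the reversal) and the other the Hankel matrix of $G$. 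Verifying that the off-diagonal blocks vanish or are killed by the unitriangular operations — and that the anti-diagonal identity block contributes exactly $(-1)^{k_0(k_0+1)/2}$ times the right power of $v_0$ — is the delicate bookkeeping at the heart of the proof. Once this single-step law is in hand, part (ii) follows by a clean induction on $j$.
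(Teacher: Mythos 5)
The paper does not prove Theorem \ref{th:super2} at all: immediately after the statement it says ``See \cite{Han2016Adv, Roblet1994, Buslaev2010, Holtz2012T} for the proof'', so there is no in-paper argument to compare against. Your outline does reconstruct the route taken in those references: for (i), the greedy extraction step $F(x)=v_0x^{k_0}/\bigl(1+u_1(x)x-x^{k_0+\delta}G(x)\bigr)$, where $u_1$ is pinned down by matching the coefficients of $x^0,\dots,x^{k_0+\delta-1}$ of $v_0x^{k_0}/F(x)$ and the degree bound on $u_1$ is exactly what makes this matching both possible and forced; for (ii), a one-step Hankel-determinant transformation law plus induction on $j$. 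Part (i) as you describe it is essentially complete, modulo one small case you should state explicitly: the tail $G$ may be identically zero (terminating fraction), so ``nonzero lowest-order coefficient'' is not automatic and the uniqueness argument must allow termination. The remark that the convergents agree with $F$ to order tending to infinity (since $s_j\to\infty$) is the right way to close part (i).

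For part (ii), however, what you have written is a plan rather than a proof, and the plan's entire mathematical content sits in the step you defer: the single-step law relating $H_{s_j}(F)$ to $H_{s_j-s_1}(G)$, with the correct power of $v_0$ and the correct sign. Asserting that row/column operations by the unitriangular Toeplitz matrix of $1+u_1(x)x$ and the shift coming from $x^{k_0+\delta}G(x)$ reduce the Hankel matrix of $F$ to a block-anti-triangular form is precisely the lemma that must be established; nothing in the sketch verifies that the off-blocks are killed, and the same analysis is also needed for the part of the theorem you state without proof, namely that $H_n(F)=0$ for every $n$ strictly between consecutive $s_j$'s (``all non-vanishing Hankel determinants'' includes this vanishing assertion). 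There is also a concrete slip in the sign bookkeeping: the anti-diagonal block produced at step $i$ has size $k_i+1=s_{i+1}-s_i$, not $k_i$; the reversal of a block of size $k_i$ would contribute $(-1)^{k_i(k_i-1)/2}$, whereas the exponent $k_i(k_i+1)/2$ in \eqref{eq:HankelDetFundamental} is the inversion number of the reversal on $k_i+1$ letters (check the base case $F=v_0x^{k_0}+\cdots$, where $H_{k_0+1}(F)=(-1)^{k_0(k_0+1)/2}v_0^{k_0+1}$ comes from an anti-triangular matrix of size $k_0+1$). So the approach is the right one and consistent with the cited proofs, but as it stands the determinantal transformation lemma — the heart of (ii) — is named, not proved, and its sign must be corrected as above before the induction can be run.
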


See \cite{Han2016Adv, Roblet1994, Buslaev2010, Holtz2012T} for the proof of Theorem \ref{th:super2}.

\medskip

{\it A short historical remark}. The main idea of the determinant formula \eqref{eq:HankelDetFundamental} may already be known by Magnus in 1970 \cite{Magnus1970}.
The present form appeared for the first time in the Ph.D. thesis 
by Emmanuel Roblet in 1994 \cite[p. 44-56]{Roblet1994} \footnote{\ The author is grateful to 
Xavier Viennot, who gave me the reference
to the Roblet's thesis.}.  
It was independently rediscovered by Buslaev in 2010~\cite{Buslaev2010},
 by Boltz-Tyaglov in 2012~\cite{Holtz2012T}, and by the author in 2016~\cite{Han2016Adv}.
The name of the continued fraction called by Roblet and Buslaev is $P$-fraction.
The $P$-fraction ({\it ``principal part plus" fraction}) was introduced by Magnus in 1962 \cite{Magnus1962C, Magnus1962P, Magnus1970, Jones1980T}, also independently by Mills-Robbins in 1986 \cite{Mills1986R} and by  Boltz-Tyaglov in 2012~\cite{Holtz2012T}. Notice that there are slight differences
among the $P$-fraction used by Roblet \cite{Roblet1994}, the $P$-fraction used by Buslaev \cite{Buslaev2010}, and the Hankel continued fraction used in 
\cite{Han2016Adv}. Roblet's $P$-fraction is faithful to the Magnus's up to some equivalent transformations, and has the following form \cite[(2.9)]{Roblet1994}:
\begin{equation*}
	F(x)=	f_0+f_1x+f_2x^2+\cdots = 
	f_0+ \cFrac{\lambda_1 x^{N_1}}{ P_1(x)} 
- \cFrac{ \lambda_2 x^{N_1+N_2}}{ P_2(x)}
- \cFrac{ \lambda_3 x^{N_2+N_3}}{P_3(x)}
- {\cdots} 
\end{equation*}
Because of the presence of the constant term $f_0$ in the 
above fraction, 
it is necessary to consider the Hankel determinants
of $(f_1, f_2, f_3, \ldots)$ under the 
Magnus $P$-fraction notation rather than
of $(f_0, f_1, f_2, \ldots)$ \cite[(2.14), (2.18)]{Roblet1994}. 
On the other hand, Buslaev used the $P$-fraction notion without 
quoting
Magnus's papers. Moreover, the condition $N_j\geq 1$, needed in the original $P$-fraction notation, had been removed~\cite{Buslaev2010}.

A special case of \eqref{eq:HankelDetFundamental}
for a restricted family of $C$-fractions
was obtained by Scott-Wall in 1940 \cite{Scott1940Wall} and  independently by Cigler in 2013 \cite{Cigler2013spe}. It is interesting to notice that almost 
all these studies were of theoretical nature with no explicit examples, 
except some artificial ones in Cigler's paper. Finally, note that several 
real-life expansions and applications are given 
in~\cite{Han2016Adv}, together with an analog of Euler-Lagrange theorem 
about periodic continued fraction
for power series over a finite field.

\section{Combinatorial interpretations of the Euler numbers} \label{sec:Comb} 


It is well-known that the Euler numbers count the 
alternating permutations~\cite{Andre1879}.
The proof of our main Theorem \ref{th:En_H} leads us to derive
further combinatorial interpretations of the Euler numbers.

Given a permutation $\sigma=\sigma_1\sigma_2\cdots \sigma_n$ with
the convention $\sigma_0=\sigma_{n+1}=+\infty$.
For each $j\in \{1,2,\ldots, n\}$, the letter $\sigma_j$ is called
{\it descent} if $\sigma_j > \sigma_{j+1}$;
{\it ascent} if $\sigma_j < \sigma_{j+1}$;
{\it peak} if $\sigma_{j-1} < \sigma_j > \sigma_{j+1}$;
{\it valley} if $\sigma_{j-1} > \sigma_j < \sigma_{j+1}$;
{\it double ascent} if $\sigma_{j-1} < \sigma_j < \sigma_{j+1}$;
{\it double descent} if $\sigma_{j-1} > \sigma_j > \sigma_{j+1}$
(See \cite{Flajolet1980, Pan2019Zeng, Fu2018Amy, Zeng1993, Zhuang2017} or \cite[Exercise 1.61]{Stanley2012EC1}).
Let $\des(\sigma), \val(\sigma), \pk(\sigma), \da(\sigma), \dd(\sigma)$ be the
numbers of descents, valleys, peaks, double ascents and double descents of $\sigma$.
In 1974, Carlitz and Scoville obtained the exponential generating function 
of the
quadruple statistic $(\val, \pk, \da, \dd)$ for the permutations \cite{Carlitz1974Scoville, Fu2018Amy, Pan2019Zeng}.
\begin{equation}\label{eq:CarlitzScoville}
	\sum_{n\geq 1} \frac {x^n}{n!} \sum_{\sigma \in \mathfrak{S}_n}
	u_1^{\val(\sigma)} u_2^{\pk(\sigma)} u_3^{\da(\sigma)} u_4^{\dd(\sigma)}
	=u_1 \frac {e^{\alpha_2 x} - e^{\alpha_1 x}}{\alpha_2 e^{\alpha_1 x} - \alpha_1 e^{\alpha_2 x}},
\end{equation}
where $u_3+u_4=\alpha_1+\alpha_2$ and $u_1u_2=\alpha_1 \alpha_2$.

\begin{Definition}\label{def:WWWW}
	We define four weight functions $W_1(\sigma), W_2(\sigma), W_3(\sigma), W_4(\sigma)$ for the permutations $\sigma=\sigma_1\sigma_2\ldots \sigma_n$.
\begin{align}
	W_1(\sigma) &= \prod_{j=2}^n (-1)^{\chi(\text{$j$ is a double ascent})}, \\
	W_2(\sigma) &= \prod_{j=2}^n 
	0^{\chi(\text{$j$ is a double ascent})}
	\left(\frac {1}{2}\right)^{\chi(\text{$j$ is a peak})} ,\label{eq:defW2} \\
	W_3(\sigma) &= \prod_{j=2}^{n}  \left(\frac {1}{2}\right)^{\chi(\text{$j$ is not a peak})}, \\
	W_4(\sigma) &= \prod_{j=1}^{n-1} 
	\left(\frac{1+i}{2}\right)^{\chi(\text{$j$ is a ascent})}
	\left(\frac{1-i}{2}\right)^{\chi(\text{$j$ is a descent})},
\end{align}
where $i$ is the imaginary unit.
\end{Definition}

\begin{Theorem}\label{th:EnComb}
For each positive integer $n$	we have
	\begin{equation}	
	\sum_{\sigma \in \mathfrak{S}_n} W_2(\sigma)
	=\sum_{\sigma \in \mathfrak{S}_n} W_3(\sigma)
	=\sum_{\sigma \in \mathfrak{S}_n} W_4(\sigma)
	=E_n,
	\end{equation}
	and
	\begin{equation}\label{eq:W1}
\sum_{\sigma \in \mathfrak{S}_n} W_1(\sigma)
		= \begin{cases}
			E_n; &\text{ if $n$ is odd}\\
			0  . &\text{if $n$ is even}
	\end{cases}
\end{equation}
\end{Theorem}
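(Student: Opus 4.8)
The plan is to recognize each of the four weights $W_i(\sigma)$ as a constant multiple of a specialization of the quadruple permutation statistic $(\val,\pk,\da,\dd)$, and then to read off $\sum_{\sigma\in\Sym_n}W_i(\sigma)$ from the Carlitz--Scoville generating function \eqref{eq:CarlitzScoville}. The point is that, with the convention $\sigma_0=\sigma_{n+1}=+\infty$, every interior letter $\sigma_j$ is exactly one of a peak, valley, double ascent or double descent, while $\sigma_1$ is always a valley or double descent and $\sigma_n$ always a double ascent or valley. Over the positions on which it ranges, each $W_i$ is therefore a product of the form $u_1^{\val(\sigma)}u_2^{\pk(\sigma)}u_3^{\da(\sigma)}u_4^{\dd(\sigma)}$ for suitable constants $u_1,u_2,u_3,u_4$ (for $W_2$ the factor $0^{\chi(\text{double ascent})}$ corresponds to $u_3=0$, with $0^0=1$, so that permutations avoiding double ascents survive); one only needs to extend the product to all $n$ positions, correcting by the constant weight of the one missing boundary position.

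Carrying this out gives: for $W_1$, $(u_1,u_2,u_3,u_4)=(1,1,-1,1)$ with no correction, since $\sigma_1$ is never a double ascent; for $W_2$, $(1,\tfrac12,0,1)$, again with no correction; for $W_3$, $(\tfrac12,1,\tfrac12,\tfrac12)$ after multiplying by $2$ to account for the missing position $j=1$, which is never a peak; and for $W_4$, using the identities ``ascent $=$ valley or double ascent'' and ``descent $=$ peak or double descent'' at each position $j\le n-1$, $(\tfrac{1+i}{2},\tfrac{1-i}{2},\tfrac{1+i}{2},\tfrac{1-i}{2})$ after multiplying by $\tfrac{2}{1+i}$ to account for the missing position $j=n$, which is always an ascent. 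In the first case $u_3+u_4=0$ and $u_1u_2=1$, so $\{\alpha_1,\alpha_2\}=\{i,-i\}$; in the other three cases $u_3+u_4=1$ and $u_1u_2=\tfrac12$, so $\{\alpha_1,\alpha_2\}=\{\tfrac{1+i}{2},\tfrac{1-i}{2}\}$. Since the right-hand fraction in \eqref{eq:CarlitzScoville} is unchanged when $\alpha_1$ and $\alpha_2$ are swapped, it depends only on the symmetric functions $u_3+u_4$ and $u_1u_2$, so only two evaluations remain.

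The computational core --- and the only real obstacle --- is those two evaluations. For $\alpha_{1,2}=\pm i$ the fraction equals $\dfrac{e^{-ix}-e^{ix}}{-i\,e^{ix}-i\,e^{-ix}}=\dfrac{\sin x}{\cos x}=\tan x$. For $\alpha_{1,2}=\tfrac{1\pm i}{2}$, writing $e^{\alpha_jx}=e^{x/2}e^{\pm ix/2}$ and simplifying, the fraction becomes $\dfrac{2\sin(x/2)}{\cos(x/2)-\sin(x/2)}$, and the half-angle identities $1+\sin x=(\cos\tfrac x2+\sin\tfrac x2)^2$ and $\cos x=(\cos\tfrac x2-\sin\tfrac x2)(\cos\tfrac x2+\sin\tfrac x2)$ turn this into $\dfrac{1+\sin x}{\cos x}-1=\tan x+\sec x-1$. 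Restoring the $u_1$-prefactor and the boundary correction yields $\tan x$ as the exponential generating function of $\bigl(\sum_{\sigma}W_1(\sigma)\bigr)_{n}$, and $\tan x+\sec x-1$ for each of $W_2,W_3,W_4$.

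Finally, \eqref{eq:EulerNumbers} gives $\tan x+\sec x-1=\sum_{n\ge1}E_n x^n/n!$, whence $\sum_{\sigma\in\Sym_n}W_i(\sigma)=E_n$ for $i=2,3,4$ and all $n\ge1$; and since $\sec$ is even and $\tan$ is odd, $\tan x=\sum_{n\ \mathrm{odd}}E_n x^n/n!$, which is exactly \eqref{eq:W1}. I would double-check the boundary corrections and the cases $n=1,2$ directly, since a stray factor $2$ or a sign is most likely to slip in there; the rest is routine.
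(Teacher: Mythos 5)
Your proposal is correct, and at its core it is the same computation as the paper's: both arguments reduce all four identities to specializations of the Carlitz--Scoville generating function \eqref{eq:CarlitzScoville} and to recognizing the series $\tan x$ and $\tan x+\sec x-1$. The organizational differences are worth noting, though. The paper works with the two-variable specialization $P(x;t,s)$ in \eqref{eq:Pn_gf} (i.e.\ $u_1=u_4=1$), treating $W_1,W_2$ via $(t,s)=(1,-1)$ and $(1/2,0)$, treating $W_3$ via $(t,s)=(2,1)$ together with the rescaling $x\mapsto x/2$ forced by $W_3=2^{1-n}2^{\pk(\sigma)}$, and treating $W_4$ with a separate tool, the Eulerian polynomial generating function \eqref{eq:An_gf} at $t=i$ using $\asc(\sigma)+\des(\sigma)=n-1$. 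You instead stay with the four-variable formula throughout, exploit that its right-hand side depends only on $u_1u_2$ and $u_3+u_4$ (apart from the prefactor $u_1$), and absorb the boundary positions into constant factors; this collapses $W_2,W_3,W_4$ into one evaluation at $u_1u_2=\tfrac12$, $u_3+u_4=1$, avoids the $x\mapsto x/2$ rescaling for $W_3$, and handles $W_4$ without the Eulerian-polynomial detour by the decomposition ascent $=$ valley or double ascent, descent $=$ peak or double descent. Your boundary bookkeeping (position $1$ is never a peak or double ascent; position $n$ is always an ascent, contributing $\tfrac{1+i}{2}$) and the two evaluations, $\tan x$ for $\{\alpha_1,\alpha_2\}=\{i,-i\}$ and $\tan x+\sec x-1$ for $\{\tfrac{1\pm i}{2}\}$, are all correct, and the parity argument giving \eqref{eq:W1} is exactly the paper's; no correction is needed.
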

Identity \eqref{eq:W1} is a result obtained by J. O. Tirrell and Y. Zhuang recently \cite{Tirrell2019Zhuang}.
For example, with $n=3$ and $E_n=2$, the 6 permutations with their four weights
are listed below. We check that Theorem \ref{th:EnComb} is true for $n=3$.
Notice that although the four results are the same, the four summations themselves are quite different.
\begin{equation*}
	\begin{array}{|c|c|c|c|c|}
		\hline
		\sigma      & W_1 & W_2 & W_3 & W_4  \\
		\hline 
		123  & (-1)\cdot(-1) &  0\cdot 0        & \frac 12\cdot \frac 12 & \frac{1+i}{2}\cdot \frac{1+i}{2}  \\ [3pt]
132  &  1\cdot 1     & \frac 12\cdot 1  & 1\cdot \frac 12 &  \frac{1+i}{2}\cdot \frac{1-i}{2}  \\[3pt]
213  &  1\cdot (-1)  &  1\cdot 0        & \frac 12\cdot \frac 12 &   \frac{1-i}{2}\cdot \frac{1+i}{2} \\[3pt]
231  &  1\cdot 1     & \frac 12\cdot 1  & 1\cdot \frac 12 &  \frac{1+i}{2}\cdot \frac{1-i}{2}  \\[3pt]
312  & 1\cdot (-1)   & 1\cdot 0         & \frac 12\cdot \frac 12 &  \frac{1-i}{2}\cdot \frac{1+i}{2}  \\[3pt]
321  & 1\cdot 1      &   1\cdot 1       & \frac 12\cdot \frac 12 &  \frac{1-i}{2}\cdot \frac{1-i}{2} \\ [3pt]
	\hline
		\text{sum} & 2 & 2 & 2 & 2 \\
	\hline
\end{array}%
\end{equation*}

It is well-known \cite{Foata1970Schutzenberger, Stanley2012EC1, Pan2019Zeng} that the exponential generating function of
the Eulerian polynomials
\begin{equation}\label{eq:An}
A_n(t)=\sum_{\sigma \in \mathfrak{S}_n} t^{1+\des(\sigma)}
\end{equation}
is 
\begin{equation}\label{eq:An_gf}
	1+\sum_{n\geq 1} A_n(t) \frac {x^n}{n!} = \frac {1-t}{1-te^{(1-t)x}}.
\end{equation}
Theorem \ref{th:EnComb} can be proved by using  \eqref{eq:An_gf} and
the Carlitz-Scoville formula without
difficulty. 
Let  $P_n(t,s)$ be the ordinary generating function of peaks and double ascents for the permutations:
\begin{equation}\label{eq:Pn}
	P_n(t,s)= \sum_{\sigma \in \mathfrak{S}_n} t^{\pk(\sigma)} s^{\da(\sigma)}.
\end{equation}
From the Carlitz-Scoville (\cite{Carlitz1974Scoville}, see also \cite{Fu2018Amy}), we have
\begin{equation}\label{eq:Pn_gf}
	P(x; t,s)=\sum_{n\geq 1} P_n(t,s) \frac {x^n}{n!}  
	=\frac {-2+2 e^{u(t,s)x}}{(1+s+u(t,s)) -(1+s-u(t,s)) e^{u(t,s)x}},
\end{equation}
where $u(t,s)=\sqrt{(1+s)^2-4t}$.

\begin{proof}[Proof of Theorem \ref{th:EnComb}]
	We have the following specializations.

	(1) When $t=1$ and $s=-1$, $u(1,-1)=2i$. Identity \eqref{eq:Pn_gf} becomes
	\begin{align*}
		P(x; 1,-1) 
		&=\frac {-2+2 e^{2xi}}{2i + 2i e^{2xi}} = \tan(x).
	\end{align*}
	Thus, relation \eqref{eq:W1} is true. 

	(2) When $t=1/2$ and $s=0$, $u(1/2, 0)=i$. Identity \eqref{eq:Pn_gf} becomes
	\begin{equation*}
		P(x; 1/2, 0) 
		=\frac {-2+2 e^{xi}}{(1+i) - (1-i) e^{xi}} = \tan(x)+\sec(x)-1.
	\end{equation*}
	Thus
	\begin{equation*}
		\tan(x)+\sec(x) = P(x;1/2,0)+1
	\end{equation*}
	or
	\begin{equation*}
		E_n = P_n(1/2,0). \qquad (n\geq 1)
	\end{equation*}

	(3) When $t=2$ and $s=1$, $u(2,1)=2i$. Identity \eqref{eq:Pn_gf} becomes
	\begin{align*}
		P(x; 2,1) 
		&=\frac {-2+2 e^{2xi}}{(2+2i) - (2- 2i) e^{2xi}} = 
		  \frac{\tan(2x)+\sec(2x)-1}{2}.
	\end{align*}
	Thus
	\begin{equation*}
		\tan(x)+\sec(x) = 2P(x/2;2,1)+1
	\end{equation*}
	or
	\begin{equation*}
		E_n = 2^{1-n} P_n(2,1). \qquad (n\geq 1)
	\end{equation*}

	(4) When $t=i$. Identity \eqref{eq:An_gf} becomes
	\begin{align*}
		1+\sum_{n\geq 1} A_n(i) \frac {x^n}{n!} 
		= \frac {1-i}{1-ie^{(1-i)x}} 
		= \frac{\tan((1+i)x)+\sec((1+i)x)-i}{1-i}.
	\end{align*}
	Thus
	\begin{equation*}
		\tan(x)+\sec(x) = 1+\sum_{n\geq 1}  \frac {A_n(i)}{(1+i)^{n-1}} \frac {x^n}{n!}
	\end{equation*}
	or
	\begin{equation*}
		E_n = -i(1+i)^{1-n} A_n(i). \qquad (n\geq 1)
	\end{equation*}
So that	
	\begin{equation*}
		E_n = \sum_{\sigma\in\mathfrak{S}_n} 
		\left(\frac{1+i}{2}\right)^{\asc(\sigma)}
		\left(\frac{1-i}{2}\right)^{\des(\sigma)}
		. \qquad (n\geq 1)\qedhere
\end{equation*}
\end{proof}

We could also provide a combinatorial proof of
Theorem \ref{th:EnComb} by using the {\it modified Foata-Strehl 
action} \cite{Tirrell2019Zhuang, Andre1879, Foata1974Strehl, Foata1976Strehl, Branden2008, Lin2015Zeng, Shapiro1983etal, Zhuang2017}. 
Actually, the combinatorial proof of identity \eqref{eq:W1} can be found in
\cite{Tirrell2019Zhuang}.
Notice that the two weight functions $W_3$ and $W_4$ are connected by 
	Stembridge's formula \cite{Stembridge1997, Zhuang2017, Branden2008}.


\section{Proofs of Theorems \ref{th:En_H} and \ref{th:En_Det}} \label{sec:proofs}
In his work on combinatorial aspects of continued fractions Flajolet \cite[Theorem 3A]{Flajolet1980}
obtained the continued fraction for the ordinary generating
function of the quadruple statistic 
$(\val, \pk, \da, \dd)$.
\footnote{\, There is a typo in \cite[Theorem 3A]{Flajolet1980}, the first numerator in the continued fraction of $P(u,v,w,z)$ should be $z$, instead of $1$.
This typo had not been fixed in the reprint \cite{Flajolet2006} of the paper.
}
By adding a superfluous variable $u_1$, 
because $\val(\sigma)=\pk(\sigma)+1$ for each permutation $\sigma$,
we restate his theorem as follows.

\begin{Theorem}\label{th:Flajolet3A}
	We have
	\begin{multline}\label{eq:Flajolet3A}
		\sum_{n\geq 1}  {x^{n}} \sum_{\sigma \in \mathfrak{S}_n}
	 u_1^{\val(\sigma)}u_2^{\pk(\sigma)} u_3^{\da(\sigma)} u_4^{\dd(\sigma)}\\
	=
\cFrac{u_1x}{1-1(u_3+u_4)x} - \cFrac{1\cdot 2u_1u_2x^2}{1-2(u_3+u_4)x} 
	-\cFrac{2\cdot 3 u_1u_2x^2}{1-3(u_3+u_4)x} - \cdots
	\end{multline}
\end{Theorem}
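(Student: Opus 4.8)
\emph{Proof strategy.} The identity \eqref{eq:Flajolet3A} differs only cosmetically from Flajolet's Theorem 3A \cite{Flajolet1980, Flajolet2006}, and the plan is first to isolate that difference and then to re-derive the underlying statement by the combinatorial theory of continued fractions. The key elementary fact is that $\val(\sigma)=\pk(\sigma)+1$ for every $\sigma\in\mathfrak{S}_n$: reading the word $(+\infty)\,\sigma_1\cdots\sigma_n\,(+\infty)$, the $n+1$ consecutive comparisons form a sign pattern that starts with a descent and ends with an ascent, so the number of places where a descent is immediately followed by an ascent (the valleys of $\sigma$) exceeds by exactly one the number of places where an ascent is immediately followed by a descent (the peaks). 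Hence $u_1^{\val(\sigma)}u_2^{\pk(\sigma)}=u_1\,(u_1u_2)^{\pk(\sigma)}$, so it is enough to prove \eqref{eq:Flajolet3A} after setting $u_1=1$ and replacing $u_2$ by $u_1u_2$; the factor $u_1x$ pulled to the front then simultaneously records the extra valley and a shift of the length by one.

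For the reduced identity I would use Flajolet's fundamental correspondence between $J$-fractions of the shape \eqref{eq:Jacobi} and generating functions of weighted Motzkin paths: if an up step is given weight $1$, a level step at height $h$ weight $\beta_h$, and a down step from height $h$ to $h-1$ weight $\lambda_h$, then the sum of $x^{|w|}\,\mathrm{wt}(w)$ over all Motzkin paths $w$ from $0$ to $0$ equals the $J$-fraction with those coefficients. What is then needed is a weight-preserving bijection between $\mathfrak{S}_n$ and Motzkin paths of length $n-1$ equipped with suitable ``labels'', namely the Fran\c{c}on--Viennot encoding (a Laguerre-history bijection): position $j$ of $\sigma$ becomes the $j$-th step, its type --- valley, peak, double ascent, double descent --- fixes the step type --- up, down, level of colour $3$, level of colour $4$ --- and the rank of $\sigma_j$ among the currently available insertion slots supplies a height-dependent multiplicity. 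A short check of these multiplicities, using $\sigma_0=\sigma_{n+1}=+\infty$, shows that the height $h$ reached just before step $j$ equals the number of active slots, so each level step contributes a factor $h+1$ (giving $\beta_h=(h+1)(u_3+u_4)$ after the colour is marked by $u_3$ or $u_4$), each down step contributes a factor $h(h+1)$ (giving $\lambda_h=h(h+1)u_1u_2$), and each up step is forced.

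I expect the only delicate point to be precisely this bookkeeping of labels, together with the correct treatment of the boundary position responsible for the prefactor $u_1x$ and for the length being $n-1$ rather than $n$; everything else is formal manipulation. A convenient sanity check is the specialization $u_1=u_2=u_3=u_4=1$, under which the claim becomes the classical expansion of $\sum_{n\ge1}n!\,x^n$ with $\beta_h=2(h+1)$ and $\lambda_h=h(h+1)$, easily verified on the first few coefficients. Since all of this already appears in \cite{Flajolet1980}, in the present paper it suffices to cite that reference and to record the reduction $\val(\sigma)=\pk(\sigma)+1$; the leading numerator $u_1x$ obtained here is also the source of the typo corrected in the footnote above.
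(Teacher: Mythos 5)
Your proposal matches the paper's treatment: Theorem \ref{th:Flajolet3A} is exactly Flajolet's Theorem 3A with the superfluous variable $u_1$ inserted via the observation $\val(\sigma)=\pk(\sigma)+1$, and the paper likewise just records this reduction and cites \cite{Flajolet1980} (modulo the typo corrected in the footnote), so your closing remark that citation plus the reduction suffices is precisely what is done. Your supplementary Fran\c{c}on--Viennot sketch is consistent with the stated weights $\beta_h=(h+1)(u_3+u_4)$, $\lambda_h=h(h+1)u_1u_2$; the one imprecision is that the encoding processes the values $1,2,\ldots,n$ in increasing order (the step type being the type of the letter $v$ in the word), not the positions $j$ read left to right, since the position-reading path never rises above height $1$.
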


At this stage
it is interesting to compare the previous continued fraction expression with the continued fraction derived for 
the {\it exponential} generating function,
as stated in Theorem \ref{th:Exp_Fraction}

From the definitions of $W_2(\sigma)$ and 
$P_n(t,s)$ given in \eqref{eq:defW2} and \eqref{eq:Pn}, respectively,
we have $E_n=P_n(1/2, 0)$ by Theorem \ref{th:EnComb}.
Hence, 
the specialization  of identity \eqref{eq:Flajolet3A} 
with $u_1=1, u_2=1/2, u_3=0, u_4=1$ leads the following Theorem.
\footnote{\,Alan Sokal has informed me that continued fraction \eqref{eq:Enxn_1pFraction}
is known. 
As written in \cite{Sokal2019EM}, Jiang Zeng showed that 
\eqref{eq:Enxn_1pFraction}
is a consequence of a formula obtained by Stieltjes \cite{Stieltjes1890}.
This continued fraction was also independently rediscovered by
Matthieu Josuat-Verg\`es \cite{JosuatVerges2014} and
Alan Sokal \cite{Sokal2019EM}.
}
\begin{Theorem}\label{th:Enxn}
We have the following continued fraction for the Euler numbers
\begin{equation}\label{eq:Enxn_1pFraction}
		\sum_{n\geq 0} E_{n} x^n =
 1+ \cFrac{x}{1-x } - \cFrac{x^2}{1-2x } - \cFrac{3x^2}{1-3x } 
	- \cFrac{6x^2}{1-4x } 
	-\cdots
\end{equation}
The general pattern for the coefficients $a_k$ and $b_k$ are: 
\begin{equation*}
a_1=x,\quad      	a_k =-\binom{k}{2}x^2;           \qquad 
b_0=1,\quad 	b_k = 1-kx .
\end{equation*}
\end{Theorem}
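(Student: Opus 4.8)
The plan is to derive Theorem \ref{th:Enxn} as an immediate specialization of Flajolet's continued fraction (Theorem \ref{th:Flajolet3A}), combined with the combinatorial model $W_2$ for the Euler numbers established in Theorem \ref{th:EnComb}. First I would record the starting point: by Theorem \ref{th:EnComb}, $E_n = \sum_{\sigma \in \mathfrak{S}_n} W_2(\sigma)$ for every $n \geq 1$, where $W_2(\sigma) = \prod_{j=2}^n 0^{\chi(j \text{ is a double ascent})}(1/2)^{\chi(j \text{ is a peak})}$. Since the position $j = 1$ is never a peak and never a double ascent (because $\sigma_0 = +\infty$), this weight equals $0^{\da(\sigma)}(1/2)^{\pk(\sigma)}$, with the convention $0^0 = 1$ understood.

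Next I would substitute $u_1 = 1$, $u_2 = 1/2$, $u_3 = 0$, $u_4 = 1$ into \eqref{eq:Flajolet3A}. On the left-hand side, using $\val(\sigma) = \pk(\sigma) + 1$ we get $u_1^{\val(\sigma)} u_2^{\pk(\sigma)} u_3^{\da(\sigma)} u_4^{\dd(\sigma)} = u_1\,(u_1 u_2)^{\pk(\sigma)}\,0^{\da(\sigma)} = (1/2)^{\pk(\sigma)} 0^{\da(\sigma)} = W_2(\sigma)$, so the left-hand side collapses to $\sum_{n\geq 1} E_n x^n$. On the right-hand side, the substitution gives $u_3 + u_4 = 1$ and $u_1 u_2 = 1/2$; thus Flajolet's first partial numerator $u_1 x$ becomes $x$ over $1 - x$, while for $k \geq 1$ the $k$-th remaining partial numerator $k(k+1) u_1 u_2 x^2 = k(k+1)x^2/2 = \binom{k+1}{2}x^2$ sits over $1 - (k+1)x$. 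Re-indexing with $m = k + 1 \geq 2$, this level carries partial numerator $\binom{m}{2}x^2$ entering with a minus sign, over partial denominator $1 - mx$; in the normalized notation of the statement this is exactly $a_1 = x$, $a_k = -\binom{k}{2}x^2$ for $k \geq 2$, $b_k = 1 - kx$. Adding $E_0 = 1$ to both sides then supplies the constant term $b_0 = 1$ and yields \eqref{eq:Enxn_1pFraction}.

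I do not expect a genuine obstacle here, since all the substance already lives in Theorems \ref{th:Flajolet3A} and \ref{th:EnComb}; the proof is essentially bookkeeping. The only points that deserve care are the convention $0^0 = 1$, so that the specialization $u_3 = 0$ retains precisely the permutations with no double ascent (each contributing factor $1$ from that term); the cancellation of the ``superfluous'' variable $u_1$ that was inserted into Flajolet's original statement via $\val = \pk + 1$, which must disappear cleanly after setting $u_1 = 1$; and the harmless index shift $m = k + 1$ when transcribing Flajolet's coefficient $k(k+1)$ into the binomial form $\binom{m}{2}$.
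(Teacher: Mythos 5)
Your proposal is correct and follows the paper's own route: the paper likewise proves Theorem \ref{th:Enxn} by specializing Flajolet's continued fraction \eqref{eq:Flajolet3A} at $u_1=1$, $u_2=1/2$, $u_3=0$, $u_4=1$, using $E_n=P_n(1/2,0)$ (equivalently the weight $W_2$) from Theorem \ref{th:EnComb}. Your bookkeeping of the index shift and the $0^{\da}$ convention matches the intended computation.
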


\medskip
Notice that the above continued fraction is neither a super $1$-fraction
nor a super $2$-fraction.
Fortunately, we can use it to derive a super $1$-fraction by a
series of chop contractions at appropriate positions.

\begin{Theorem} \label{th:En_super1}
	We have the following super $1$-fraction expansion:
	\begin{align}
	\sum E_n x^n=
& \ \cFrac{ 1}{ 1} 
- \cFrac{ x}{ 1}
- \cFrac{ x^2}{ 1-2x}
- \cFrac{ 3x^2}{ 1}
- \cFrac{ 3x}{ 1}
- \cFrac{ 2x}{ 1}
		- \cFrac{ 2x}{ 1}\label{eq:En_super1}\\
&\quad
- \cFrac{ 5x}{ 1} 
- \cFrac{ 15x^2}{ 1-6x}
- \cFrac{ 21x^2}{ 1}
- \cFrac{ 7x}{ 1}
- \cFrac{ 4x}{ 1}
- \cFrac{ 4x}{ 1}
		- {\cdots} \nonumber
\end{align}
The general patterns for the coefficients $a_j$ and $b_j$ are:
\begin{align*}
	a_{1}&=1                         ,&  b_{0}&=0               ,       \\ 
	a_{6k} &=-2kx                  ,&  b_{6k} &= 1          ,       \\ 
	a_{6k+1} &=-2kx                  ,&  b_{6k+1} &= 1          ,       \\ 
	a_{6k+2} &=-(4k + 1)x            ,&  b_{6k+2} &= 1          ,      \\ 
	a_{6k+3} &=-(4k + 1)(2k + 1)x^2  ,&  b_{6k+3} &= 1-2(2k + 1)x ,    \\ 
	a_{6k+4} &=-(4k + 3)(2k + 1)x^2  ,&  b_{6k+4} &= 1              ,  \\ 
	a_{6k+5} &=-(4k + 3)x            ;&  b_{6k+5} &= 1              .  
\end{align*}

\end{Theorem}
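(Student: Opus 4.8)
The plan is to derive the super $1$-fraction \eqref{eq:En_super1} from the continued fraction \eqref{eq:Enxn_1pFraction} of Theorem~\ref{th:Enxn} by applying the chop contraction \eqref{eq:chop} repeatedly at carefully chosen positions. The starting fraction $1+ \cFrac{x}{1-x} - \cFrac{x^2}{1-2x} - \cFrac{3x^2}{1-3x} - \cdots$ has all $b_k = 1-kx$ of degree $1$, so it is neither a super $1$-fraction (whose numerators would need $x$-degree stabilizing with the right exponent pattern) nor a super $2$-fraction; the obstruction is precisely these linear denominators. The chop contraction turns one step $\cFrac{a_k}{b_k}$ with $b_k = 1-kx$ into two steps: a first factor $\frac{b_{k-1}b_k + a_k}{b_k}$ which, after an equivalence transformation, splits off a numerator of pure degree $1$ in $x$ with denominator $1$, followed by $\cFrac{a_ka_{k+1}/b_k}{b_kb_{k+1}+a_{k+1}}$, then the tail resumes with $\cFrac{a_{k+2}b_k}{b_{k+2}} + \cdots$. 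The idea is that applying this at every index $k$ \emph{except} those congruent to the ``$b_{6k+3}$'' positions clears all the unwanted linear factors, leaving denominators that are either $1$ or the genuinely quadratic $1-2(2k+1)x$ demanded by the super $1$-fraction form.

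First I would fix the precise set of positions at which to chop. Reading off \eqref{eq:En_super1}, the denominators equal to $1$ occur at indices $\equiv 0,1,2,4,5 \pmod 6$ and the nontrivial denominator $1-2(2k+1)x$ occurs at index $6k+3$. So the chop contractions should be performed to eliminate the linear denominators $1-kx$ of \eqref{eq:Enxn_1pFraction} at all but the appropriate arithmetic-progression positions, and one checks that a single chop at a given location produces \emph{two} new partial quotients — which matches the roughly doubling of the index ($k \mapsto 6k+\text{something}$ versus $k \mapsto k$) between the two fractions. Concretely, I would index the partial quotients of \eqref{eq:Enxn_1pFraction} and track how a block of three consecutive original steps (say indices $k$, $k+1$, $k+2$) is transformed by one chop applied at index $k$ into a block that, after an equivalence transformation normalizing the leading $1$'s, reproduces the $W$-pattern of six steps modulo $6$ in \eqref{eq:En_super1}. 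Because the original coefficients are $a_1 = x$, $a_k = -\binom{k}{2}x^2$, $b_k = 1-kx$, every quantity appearing in the chop formula — namely $b_{k-1}b_k + a_k$, $a_ka_{k+1}/b_k$, $b_kb_{k+1}+a_{k+1}$, $a_{k+2}b_k$ — is an explicit polynomial in $x$, and the key algebraic miracle to verify is that $a_k a_{k+1}/b_k$ is actually a \emph{polynomial}, i.e. $1-kx \mid \binom{k}{2}\binom{k+1}{2}x^4$, which it is not in general; so the chops cannot be applied at arbitrary positions, and this divisibility constraint is exactly what pins down the admissible positions. More precisely, one must check that after the first chop introduces the factor $b_k b_{k+1} + a_{k+1}$ as a new denominator, the \emph{next} chop is applied to a denominator of the form $1-mx$ arising further along, and that the bookkeeping closes up with period $6$.

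Once the positions are fixed, the proof is an induction: assume that after finitely many chops the fraction has the shape of \eqref{eq:Enxn_1pFraction} truncated past some index, with the already-processed head matching \eqref{eq:En_super1}, and show one more block of chops extends the match by six partial quotients while leaving the tail in the same canonical form (with shifted parameter $k \mapsto k+1$). The inductive step reduces to a finite polynomial identity in $x$ and $k$: substitute $a_j, b_j$ from Theorem~\ref{th:Enxn}, apply the chop formula \eqref{eq:chop} (and an equivalence transformation to pull out scalar factors so the $b_j$'s become $1$), and compare with the closed forms $a_{6k},\ldots,a_{6k+5}$, $b_{6k},\ldots,b_{6k+5}$ listed in the statement. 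I expect the main obstacle to be precisely the combinatorial-bookkeeping part: organizing the sequence of chops so that each one lands on a denominator that has \emph{not} already been consumed and so that the resulting numerators and denominators telescope into the stated six-fold periodic pattern — in particular verifying the divisibility $b_k \mid a_k a_{k+1}$ holds exactly at the chosen positions and keeping track of the accumulated scalar factors from the equivalence transformations so that the final $a_j$'s come out as the clean products $(4k+1)(2k+1)$, $(4k+3)(2k+1)$, etc. The underlying polynomial algebra, by contrast, is routine once the pattern is correctly guessed, and the identification of \eqref{eq:En_super1} as a \emph{bona fide} super $1$-fraction (checking the degree condition $\deg u_j \le k_{j-1}-1$ on the $u_j$'s, which here is trivial since those denominators are constant or of the allowed degree) follows immediately from inspection of the coefficient formulas.
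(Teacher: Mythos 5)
Your overall architecture --- prove \eqref{eq:En_super1} by linking it to the known fraction \eqref{eq:Enxn_1pFraction} of Theorem \ref{th:Enxn} through the chop identity \eqref{eq:chop} and a period-six induction --- is exactly the strategy of the paper, but the operational core of your plan has a genuine flaw. You propose to apply chop contractions to \eqref{eq:Enxn_1pFraction} at ``admissible'' positions singled out by the divisibility $b_k \mid a_k a_{k+1}$, i.e. $1-kx \mid \binom{k}{2}\binom{k+1}{2}x^4$. That divisibility holds at \emph{no} position whatsoever (the left side vanishes at $x=1/k$, the right side does not), so the set of admissible positions your argument relies on is empty and this criterion cannot ``pin down'' anything. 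Moreover the chop \eqref{eq:chop}, read in the direction you apply it, \emph{merges} two partial quotients into one (and multiplies the following numerator by $b_1$); applied to \eqref{eq:Enxn_1pFraction}, whose denominators are $1-kx$, it yields fewer quotients with non-polynomial or higher-degree entries, i.e. it moves away from the refined fraction \eqref{eq:En_super1}, not toward it. What your plan actually needs is the identity read the other way (an extension, splitting one quotient into two), and an extension is not canonical: specifying the split amounts to writing down the finer fraction, that is, to guessing \eqref{eq:En_super1} --- which the theorem statement already supplies, so nothing is gained by phrasing the proof in that direction.

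The repair is the paper's proof run in the natural direction: start from the claimed fraction \eqref{eq:En_super1}, where every position to be chopped has denominator $1$ (so $b_1=1$ in \eqref{eq:chop}, no divisibility question ever arises and the tail is undisturbed), chop at positions $1,4,5,8,9,\ldots$ --- in words, always contract the second numerator that is a monomial of degree one --- and verify the two periodic polynomial identities (one for the block ending in $-(4k+3)x$, one for the block beginning with $-(2k+2)x$) showing that this process reproduces, quotient by quotient, the fraction \eqref{eq:Enxn_1pFraction}. Since chop contractions preserve the value and \eqref{eq:Enxn_1pFraction} equals $\sum E_n x^n$ by Theorem \ref{th:Enxn}, the expansion follows, after the (correct, and as you say routine) check that \eqref{eq:En_super1} is a bona fide super $1$-fraction with $(k_0,k_1,k_2,\ldots)=(0,0,1,0,0,0)^*$. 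With that reversal your inductive bookkeeping is sound; as written, however, your rule for where to chop and the direction in which you run the chop identity do not work.
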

\begin{proof} 
	The theorem will be proved by using the chop contraction
	defined in \eqref{eq:chop} at specific positions.
	Chop contraction at the  first position of the
	continued fraction on the right-hand side of \eqref{eq:En_super1}, we get
\begin{equation*}
{ 1} 
	+ \cFrac{ x}{ 1-x}
- \cFrac{ x^2}{ 1-2x}
- \cFrac{ 3x^2}{ 1}
- \cFrac{ 3x}{ 1}
- \cFrac{ 2x}{ 1}
- \cFrac{ 2x}{ 1}\\
- \cFrac{ 5x}{ 1} 
- \cFrac{ 15x^2}{ 1-6x}
 - {\cdots}
\end{equation*}
Then, chop at the 4th position:
\begin{equation*}
{ 1} 
	+ \cFrac{ x}{ 1-x}
- \cFrac{ x^2}{ 1-2x}
- \cFrac{ 3x^2}{ 1-3x}
- \cFrac{ 6x^2}{ 1-2x}
- \cFrac{ 2x}{ 1}\\
- \cFrac{ 5x}{ 1} 
- \cFrac{ 15x^2}{ 1-6x}
 - {\cdots}
\end{equation*}
and chop at the 5th position:
\begin{equation*}
{1} 
	+ \cFrac{ x}{ 1-x}
- \cFrac{ x^2}{ 1-2x}
- \cFrac{ 3x^2}{ 1-3x}
- \cFrac{ 6x^2}{ 1-4x}
- \cFrac{ 10x^2}{ 1-5x}\\
- \cFrac{ 15x^2}{ 1-6x}
 - {\cdots}
\end{equation*}
The next chop contraction is to be applied at position 8.
In general,
we contract the second numerator that is a monomial in $x$ of degree $1$, and repeat.
This will work since
\begin{equation*}
\begin{split}
	&-  \cFrac{ (4k + 3)x  }{ 1}
- \cFrac{ (2k+2)x}{ 1}
- \cFrac{ (2k+2)x}{ 1}
+ {\cdots} \\
	= &
	-   (4k + 3)x  
	- \cFrac{ (4k+3)(2k+2)x^2}{ 1-(2k+2)x}
- \cFrac{ (2k+2)x}{ 1}
+ {\cdots}
\end{split}
\end{equation*}
and
\begin{equation*}
	\begin{split}
		&-  \cFrac{ (2k + 2)x  }{ 1}
- \cFrac{ (4k+5)x}{ 1}
	- \cFrac{ (4k+5)(2k+3)x^2}{ 1-2(2k+3)x}
+ {\cdots} \\
		=&
-   (2k + 2)x 
	- \cFrac{ (4k+5)(2k+2)x^2}{ 1-(4k+5)}
	- \cFrac{ (4k+5)(2k+3)x^2}{ 1-2(2k+3)x}
+ {\cdots}
	\end{split}
\end{equation*}
Finally, we will get the continued fraction on the right-hand side of \eqref{eq:Enxn_1pFraction}.
Hence, \eqref{eq:En_super1} is true.
We verify that the continued fraction is a super $1$-fraction, under the general super continued fraction form
\eqref{eq:super} with $\delta=1$ and  $(k_0, k_1, k_2, \ldots) = (0,0,1,0,0,0)^*$, where
the star sign means that the sequence is periodic and obtained by repeating the 
underlying segment.
\end{proof}

\begin{proof}[Proof of Theorem \ref{th:En_H}]
We prove the result by
applying the even contraction on the  super $1$-fraction \eqref{eq:En_super1} given in Theorem \ref{th:En_super1}. 
Let us detail only the calculations for the coefficients $a_j'$. 
	From the general formula for the even contraction \eqref{eq:EvenContraction} we have
\begin{align*}
	a_1' &=  a_1 b_2= 1,   \cr
	a_2' &=  - a_{2}a_{3} { b_4} = -x^3, \cr
	a_{3k}' &=  - a_{6k-2} a_{6k-1} {b_{6k-4}} {  b_{6k}}
	 = -(4k-1)^2(2k-1)x^3 ,\\
	a_{3k+1}' &=  - a_{6k} a_{6k+1} {b_{6k-2}} {  b_{6k+2}}
	= -4k^2x^2, \\
	a_{3k+2}' &=  - a_{6k+2} a_{6k+3} {b_{6k}} {  b_{6k+4}}
	=  - (4k+1)^2 (2k+1)x^3.
\end{align*}
The calculations for the coefficients $b_j'$ are similar.
	We verify that the continued fraction \eqref{eq:En_H} is a super $2$-fraction, under the general super continued fraction form
\eqref{eq:super} with $\delta=2$ and  $(k_0, k_1, k_2, \ldots) = (0,1,0)^*$.
\end{proof}

\begin{proof}[Proof of Theorem \ref{th:En_Det}]
The Hankel determinants are evaluated by using the fundamental
theorem \ref{th:super2}.
For the Hankel continued fraction given in \eqref{eq:En_H}, we have
$(k_0, k_1, k_2, \ldots)=(0,1,0)^*$. So that
$$(s_0, s_1, s_2, \ldots)= (0,1,3,4,5,7,8,9,11,12,13,15\ldots)$$ 
and
$$(\epsilon_0, \epsilon_1,\epsilon_2, \ldots) 
	= (0,0,1,1,1,2,2,2,3,3,3,4,\ldots). $$
	Comparing \eqref{eq:En_H} and \eqref{eq:super}, we have
\begin{align*}
	v_0&=1 , \\
	v_{3k} &=4k^2 , \\
	v_{3k+1} &=(4k + 1)^2(2k + 1) , \\
	v_{3k+2} &=(4k +3)^2(2k + 1).
\end{align*}
	Put all these $(v_j), (s_j), (\epsilon_j)$ into \eqref{eq:HankelDetFundamental},  we 
	obtain the explicit Hankel determinant formulas given in Theorem \ref{th:En_Det}
	after simplification.
\end{proof}


\section{The ordinary generating functions of the Euler numbers}\label{sec:ordgf} 
In this section we consider the ordinary generating functions of the Euler
numbers, and derive some Hankel continued fractions and some Hankel
determinants involving these numbers.
Some formulas are known or easy to prove. We list them here for a quick view
and comparison.  Let $(\sec(x))^r=\sum_{n\geq 0} E_{2n}^{(r)} \frac {x^{2n}}{(2n)!}$.

\begin{Theorem}\label{th:MainHFrac_ord}
	We have the following Hankel continued fraction expansions.
	\begin{equation}\tag{F1}
	\sum_{n\geq 0} E_{2n}^{(r)} x^{2n} = 
	\cFrac {1}{1} - \cFrac{1rx^2}{1}
	- \cFrac{2(r+1)x^2}{1}- \cFrac{3(r+2)x^2}{1} - \cdots
		\qquad
\end{equation}
\begin{equation}\tag{F2}
	\sum_{n\geq 0} E_{2n}^{(r)} x^{n} = 
\cFrac{1}{1-rx} - \cFrac{2r(r + 1)x^2}{1-(5r + 8)x } 
- \cFrac{-12(r+2)(r+3)x^2}{1-(9r + 32)x } 
-\cdots
	\end{equation}
The general patterns for the coefficients $a_j$ and $b_j$ are:
\begin{equation*}
	\begin{gathered}
	a_1=1,\quad  a_k =-(2k + r - 3)(2k + r - 4)(2k - 3)(2k - 2)x^2;\\
	b_0=0,\quad  b_k = 1-(8k^2 + 4kr - 16k - 3r + 8)x.
	\end{gathered}
\end{equation*}
\begin{equation}\tag{F3}
	\sum_{n\geq 1} E_{2n}^{(r)} x^{n-1} 
	=\cFrac{r}{1 -(2+3r)x} - \cFrac{6(r + 2)(r + 1)x^2}{1 -(18+7r)x} 
	+\cdots
	\qquad\qquad
\end{equation}
The general patterns for the coefficients $a_j$ and $b_j$ are:
	\begin{align*}
		\begin{gathered}
		a_{1}=rx,  \quad
	a_{k} =-2(2k + r - 2)(2k + r - 3)(2k - 1)(k - 1)x^2;  \\ 
		b_{0}=1 , \quad
		b_{k} =1-( 8k^2 + 4kr - 8k - r + 2 )x. 
		\end{gathered}
\end{align*}
\begin{equation}\tag{F4}
\sum_{n\geq 0} E_{2n+1} x^{2n} = 
	\cFrac {1}{1} - \cFrac{1\cdot 2x^2}{1}
	- \cFrac{2\cdot 3x^2}{1}- \cFrac{3\cdot 4x^2}{1} - \cdots
\end{equation}
\begin{equation}\tag{F5}
\sum_{n\geq 0} E_{2n+1} x^{n} = 
\cFrac{1}{-2x + 1} + \cFrac{-12x^2}{-18x + 1} + \cFrac{-240x^2}{-50x + 1} 
+ \cdots
\end{equation}
The general patterns for the coefficients $a_j$ and $b_j$ are:
	\begin{equation*}
		\begin{gathered}
		a_1=1, \quad 
		a_{k} =-4(2k - 1)(2k - 3)(k - 1)^2x^2; \\
		b_0=0, \quad 
		b_{1} = -2(2k - 1)^2x + 1.
		\end{gathered}
	\end{equation*}
\begin{equation}\tag{F6}
\sum_{n\geq 1} E_{2n+1} x^{n-1} = 
	\cFrac{2}{1-8x} - \cFrac{72x^2}{1-32x } - \cFrac{600x^2}{1-72x } 
	+\cdots
	\qquad
\end{equation}
The general patterns for the coefficients $a_j$ and $b_j$ are:
\begin{equation*}
	\begin{gathered}
	a_{1}=2 , \quad 
	a_{k} =-4(2k - 1)^2(k - 1)kx^2;  \qquad
		b_{0}=0 , \quad 
		b_{k} = 1-8k^2x. 
	\end{gathered}
\end{equation*}
\begin{equation}\tag{F7}
		\sum_{n\geq 0} E_{n+1} x^n =
 \cFrac{1}{1-x } - \cFrac{x^2}{1-2x } - \cFrac{3x^2}{1-3x } 
	- \cFrac{6x^2}{1-4x } 
	-\cdots
\end{equation}
The general patterns for the coefficients $a_k$ and $b_k$ are given by
\begin{equation*}
	a_1=1 ,\quad 
	a_k =-\binom{k}{2}x^2; \qquad 
	b_0=0 , \quad  
b_k = 1-kx. 
\end{equation*}
	\begin{multline}\tag{F8}
	\sum_{n\geq 1} \frac {E_{2n}^{(r)}}{r}  {x^{2n-1}}=
	\cFrac{ x}{ 1 -(3r +2)x^2} \\
	\qquad - \cFrac{ 2\cdot 3(r + 2)(r + 1) x^4  }{  1-(7r+18)x^2 }
	- \cFrac{ 4\cdot 5 (r + 4)(r + 3)  x^4}{ 1-(11r+50)x^2} 
	- \cdots 
\end{multline}
The general patterns for the coefficients $a_j$ and $b_j$ are:
\begin{equation*}
	\begin{gathered}
	a_{1}=x,\quad 
a_{k} = -(2k - 1)(2k-2)   (2k  - 3 +r)(2k - 2+r)x^4;   \\
	b_{0}=0 , \quad 
	b_{k}= 1  -(8k^2 - 8k  +4rk + 2-r)x^2.
	\end{gathered}
\end{equation*}
\begin{equation}\tag{F9}
	\sum_{n\geq 0} E_{2n+1}x^{2n+1} = \cFrac{ x}{ 1 -2\cdot 1^2 x^2} 
	- \cFrac{ 1\cdot 2^2\cdot 3 x^4  }{  1-2\cdot 3^2x^2 }
	- \cFrac{ 3\cdot 4^2\cdot 5   x^4}{ 1-2\cdot 5^2x^2} 
	- \cdots 
\end{equation}
The general patterns for the coefficients $a_j$ and $b_j$ are:
\begin{equation*}
	\begin{gathered}
	a_{1}=x, \quad 
a_{k} = -(2k - 1)(2k-2)^2   (2k  - 3 )x^4;   \\
	b_{0}=0, \quad 
	b_{k}= 1  -2(2k-1)^2x^2.
	\end{gathered}
\end{equation*}
\begin{multline}\tag{F10}
		\sum_{n\geq 0} E_{n+2} x^n 
		= \cFrac{1}{1-2x} - \cFrac{x^2}{1-4x} 
	- \cFrac{18x^3}{1 - 4x-16x^2 } - \cFrac{50x^3}{1-8x} \\
	     - \cFrac{9x^2}{1-10x } 
	- \cFrac{196x^3}{1 -8x -64x^2 } - \cFrac{324x^3}{1-14x}
	-\cdots
\end{multline}
The general patterns for the coefficients $a_j$ and $b_j$ are:
\begin{align*}
	a_{1}&=1,                    &  b_{0}&=0                   ,        \\
	a_{3k+0} &=-2(4k - 1)^2kx^3, &  b_{3k+0} &= -16k^2x^2 - 4kx + 1 ,   \\
	a_{3k+1} &=-2(4k + 1)^2kx^3, &  b_{3k+1} &= -2(3k + 1)x + 1     ,   \\
	a_{3k+2} &=-(2k + 1)^2x^2;   &  b_{3k+2} &= -2(3k + 2)x + 1 .       
\end{align*}
\begin{equation}\tag{F11}
\sum_{n\geq 1} E_{2n+1} x^{2n-1} = 
	\cFrac{2x}{1-8x^2} - \cFrac{72x^4}{1-32x^2 } - \cFrac{600x^4}{1-72x^2 } 
	+\cdots
	\qquad
\end{equation}
The general patterns for the coefficients $a_j$ and $b_j$ are:
\begin{equation*}
	\begin{gathered}
	a_{1}=2x , \quad 
	a_{k} =-4(2k - 1)^2(k - 1)kx^4;  \\ 
		b_{0}=0 , \quad 
		b_{k} = 1-8k^2x^2. 
	\end{gathered}
\end{equation*}
\end{Theorem}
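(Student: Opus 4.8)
The plan is to obtain the eleven expansions (F1)--(F11) not independently but as descendants of a short list of seed continued fractions, by means of the contraction formulas of Section~\ref{sec:ContFrac} together with three bookkeeping moves: the substitution $x^{2}\mapsto x$ and its inverse, multiplication or division by a power of $x$ at the head of a fraction, and removal of the constant (or leading) term. The seeds are the two Stieltjes $S$-fractions \eqref{eq:E2n} and \eqref{eq:E2np1}, the continued fraction of Theorem~\ref{th:Enxn} for $\sum E_{n}x^{n}$ (itself obtained from Flajolet's Theorem~\ref{th:Flajolet3A}), and the classical $S$-fraction for $(\sec x)^{r}$ with level coefficients $k(k+r-1)$, which is exactly (F1); the latter is due to Al-Salam and Carlitz~\cite{AlSalam1959Carlitz} and can also be recovered from the three-term recurrence attached to $(\sec x)^{r}$ (or from a weighted Dyck-path model). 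One first observes that (F1) reduces to \eqref{eq:E2n} at $r=1$, and that, because $\tan'=\sec^{2}$ forces $E_{2n+1}=E_{2n}^{(2)}$, the expansion (F4) is just (F1) at $r=2$ (equivalently \eqref{eq:E2np1} divided by $x$).

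The rest then follow mechanically. Substituting $x^{2}\mapsto x$ in (F1) (resp. in \eqref{eq:E2np1}) produces an $S$-fraction in $x$, and a single even contraction \eqref{eq:EvenContraction} turns it into the $J$-fraction (F2) (resp. (F5)); carrying out the even contraction on \eqref{eq:E2np1} without substituting instead yields (F9). For the shifted series one strips the head term and rescales: (F7) is literally $(\sum E_{n}x^{n}-1)/x$ read off from Theorem~\ref{th:Enxn}, while (F3), (F6), (F8) and (F11) come from (F1), (F4), (F1) and \eqref{eq:E2np1} respectively by a chop contraction \eqref{eq:chop} (or the haircut \eqref{eq:Haircut}) that extracts the constant term, a division by the appropriate power of $x$, and, where the $J$- or $H$-form is wanted, a further even contraction. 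Finally (F10) is treated along the lines of the proof of Theorem~\ref{th:En_H} --- a cascade of chop contractions followed by an even contraction --- producing a super-$2$-fraction with the period-three pattern $(k_{0},k_{1},k_{2},\ldots)=(0,0,1)^{*}$; as Remark~7 points out, the clean intermediate super-$1$-fraction available for $\sum E_{n}x^{n}$ has no counterpart here, so the positions at which one chops must be chosen afresh.

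After each derivation two routine checks remain. First, that the resulting fraction really is of the announced type: one matches it against the super continued fraction template \eqref{eq:super} with $\delta=2$ and with $(k_{j})=(0)^{*}$ in the $J$-cases, or $(1,1,1)^{*}$ or $(0,0,1)^{*}$ in the $H$-cases. Second, that the closed forms of $a_{j}$ and $b_{j}$ are correct: these are precisely the outputs of \eqref{eq:EvenContraction}, \eqref{eq:chop} and \eqref{eq:Haircut} fed with the seed's coefficients, simplified separately on each residue class modulo $1$, $2$ or $3$, following the index convention. Both are finite, if tedious, computations.

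Two points should be the real work. The first is (F1) for general $r$: unless it is simply quoted, one has to identify $\sum_{n\ge 0}E_{2n}^{(r)}x^{2n}$ with a Dyck-path generating function carrying level weights $k(k+r-1)$, or argue via the differential equation for $(\sec x)^{r}$, and this is the only place where a combinatorial ingredient beyond Carlitz--Scoville and Flajolet might be needed. The second is the doubly shifted fraction (F10): as Remark~7 indicates, passing from $\sum E_{n+1}x^{n}$ to $\sum E_{n+2}x^{n}$ forfeits the tidy super-$1$-fraction, so the chop-contraction schedule of Theorem~\ref{th:En_super1} must be redesigned, and keeping the quadratic denominators such as $1-4x-16x^{2}$ and $1-8x-64x^{2}$ and the period-three coefficient patterns under control is the principal computational hurdle; by comparison the other ten expansions are direct turns of the same contraction crank.
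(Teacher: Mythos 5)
Your proposal is correct and follows essentially the same route as the paper: quote the classical $S$-fraction (F1) and the Stieltjes fractions \eqref{eq:E2n}--\eqref{eq:E2np1} together with Theorem \ref{th:Enxn} as seeds, then obtain (F2)--(F11) by the substitution $x^2\mapsto x$, head-stripping, and the even/odd/chop/haircut contractions, with (F10) handled as in the proof of Theorem \ref{th:En_H} by a redesigned chop schedule followed by an even contraction, yielding the $(0,0,1)^*$ pattern. Your minor deviations (e.g.\ getting (F9) by an even contraction of \eqref{eq:E2np1} directly rather than setting $r=0$ in (F8), and using chop-plus-even in place of the paper's odd contraction for (F3) and (F6)) are equivalent reroutings of the same machinery.
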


\begin{proof}
	(F1) This is a well-known formula (see \cite[p. 206]{Wall1948}, \cite{Flajolet1980}).

	(F2) First, replace $x^2$ by $x$ in (F1) we get
	\begin{equation}\label{eq:F1z}
	\sum_{n\geq 0} E_{2n}^{(r)} z^{n} = 
	\cFrac {1}{1} - \cFrac{1rz}{1}
	- \cFrac{2(r+1)z}{1}- \cFrac{3(r+2)z}{1} - \cdots
\end{equation}
	Even contraction on \eqref{eq:F1z} yields (F2).

	(F3) Odd contraction on \eqref{eq:F1z}. Then, subtract by 1 and divide by $x$. 

	(F4) Divide $x$ in \eqref{eq:E2np1}. 

	(F5) First, replace $x^2$ by $x$  in (F4), we get
\begin{equation}\label{eq:F4x}
\sum_{n\geq 0} E_{2n+1} z^{n} = 
	\cFrac {1}{1} - \cFrac{1\cdot 2z}{1}
	- \cFrac{2\cdot 3z}{1}- \cFrac{3\cdot 4z}{1} - \cdots
\end{equation}
	Even contraction on \eqref{eq:F4x} yields (F5).

(F6) Odd contraction on \eqref{eq:F4x}. Then, subtract by 1 and divide by $x$. 

(F7) Subtract by 1 and divide by $x$ in \eqref{eq:Enxn_1pFraction}.

(F8)  Chop contraction on (F1) yields
\begin{equation*}
	\sum_{n\geq 0}E_{2n}^{(r)} x^{2n}=1 + \cFrac{rx^2}{1-rx^2 } 
	- \cFrac{2(r + 1)x^2}{1} - \cFrac{3(r + 2)x^2}{1} - \cFrac{4(r + 3)x^2}{1} -  \cdots
\end{equation*}
Then, subtract by 1 and divide by $rx$ in the above identity. We get
\begin{equation*}
	\sum_{n\geq 1} \frac{E_{2n}^{(r)}}{r} x^{2n-1}= \cFrac{x}{1-rx^2 } 
	- \cFrac{2(r + 1)x^2}{1} - \cFrac{3(r + 2)x^2}{1} - \cFrac{4(r + 3)x^2}{1} -  \cdots
\end{equation*}
with the general patterns
\begin{equation*}
	\begin{gathered}
a_1=x ,\quad 
a_k =-(k + r - 1)kx^2;  \\
	b_0=0,\quad 
	b1=1-rx^2  ,\quad 
b_k = 1. 
	\end{gathered}
\end{equation*}
	Notice that the previous continued fraction is {\it not} a super $1$-fraction. 
	Even contraction on the above fraction yields (F8), which is a 
	$H$-fraction.

	(F9) Take $w=0$ in (F8). We get (F9), by using the fact that 
$$\sum_{n\geq 1} \frac {E_{2n}^{(r)}}{r} \frac {x^{2n-1}}{(2n-1)!}
=	\frac{1}{r}(\sec(x)^r)' =\tan(x)\sec(x)^r.
$$

	(F10) This continued fraction is very similar to that given in  Theorem \ref{th:En_H}. However, unlike \eqref{eq:En_H}, which has a super $1$-fraction (see Theorem \ref{th:En_super1}), (F10) does not have a super $1$-fraction.
	Thanks to the similarity of (F10) and \eqref{eq:En_H}, this proof is suggested by the proof of Theorem \ref{th:En_H}.

	Apply the haircut contraction as defined in \eqref{eq:Haircut} to (F7) with $\alpha=1$. We get
\begin{equation*}		
	\sum_{n\geq 0} E_{n+1} x^n 
		=1+ \cFrac{x}{1-x } - \cFrac{x}{1-x } - \cFrac{3x^2}{1-3x } 
	- \cFrac{6x^2}{1-4x } -\cdots\\
\end{equation*}
Hence
\begin{equation}\label{eq:Enp2_xx}
	\sum_{n\geq 0} E_{n+2} x^n 
		= \cFrac{1}{1-x } - \cFrac{x}{1-x } - \cFrac{3x^2}{1-3x } 
	- \cFrac{6x^2}{1-4x } -\cdots\\
\end{equation}
This is neither a super $1$-fraction, nor a super $2$-fraction.
Now, we claim that
\begin{multline}\label{eq:Enp2_notsuper1}
	\sum_{n\geq 0} E_{n+2} x^n =
\;\cFrac{1}{1-x } - \cFrac{x}{1} - \cFrac{x}{1} - \cFrac{3x}{1} 
	 - \cFrac{6x^2}{1-4x }\\
	 - \cFrac{10x^2}{1} 
	 - \cFrac{5x}{1} 
	- \cFrac{3x}{1} - \cFrac{3x}{1} -\cdots 
\end{multline}
with the general patterns
\begin{align*}
	a_{1}&=1          ,&                    b_{0}=0, &\quad b_{1}=1-x ,      \\ 
	a_{6k+0} &=-2(4k + 1)kx^2 ,&            b_{6k+0} &= 1,                     \\ 
	a_{6k+1} &=-(4k + 1)x       ,&          b_{6k+1} &= 1,                     \\ 
	a_{6k+2} &=-(2k + 1)x       ,&          b_{6k+2} &= 1,                     \\ 
	a_{6k+3} &=-(2k + 1)x         ,&        b_{6k+3} &= 1,                     \\ 
	a_{6k+4} &=-(4k + 3)x          ,&       b_{6k+4} &= 1,                     \\ 
	a_{6k+5} &=-2(4k + 3)(k + 1)x^2  ;&     b_{6k+5} &= 1-4(k + 1)x.       
\end{align*}
Notice that \eqref{eq:Enp2_notsuper1} is neither a super $1$-fraction nor a super $2$-fraction.

	By using the chop contractions at appropriate positions in the
	right-hand side of
	\eqref{eq:Enp2_notsuper1}, we successively get
\begin{align*}
	&\cFrac{1}{1-x } - \cFrac{x}{1} - \cFrac{x}{1} - \cFrac{3x}{1} 
	- \cFrac{6x^2}{1-4x } - \cFrac{10x^2}{1} - \cFrac{5x}{1} 
	- \cFrac{3x}{1} - \cdots\\
& = \cFrac{1}{1-x } - \cFrac{x}{1-x} -  \cFrac{3x^2}{1-3x} 
	- \cFrac{6x^2}{1-4x } - \cFrac{10x^2}{1} - \cFrac{5x}{1} 
	- \cFrac{3x}{1} - \cdots\\
& = \cFrac{1}{1-x } - \cFrac{x}{1-x} -  \cFrac{3x^2}{1-3x} 
	- \cFrac{6x^2}{1-4x } - \cFrac{10x^2}{1-5x}  
	- \cFrac{15x^2}{1-3x} - \cdots
\end{align*}
By using the general patterns we can obtain the right-hand side of 
\eqref{eq:Enp2_xx},  so that \eqref{eq:Enp2_notsuper1} is true.
Finally, an even contraction on \eqref{eq:Enp2_notsuper1} yields (F10).
We verify that (F10) is a $H$-fraction.

(F11) Replace $x$ by $x^2$ and multiply by $x$ in (F6).
\end{proof}

By using Theorem \ref{th:super2}, the Hankel continued fractions
(F1--F11) listed in Theorem \ref{th:MainHFrac_ord} implies the 
Hankel determinants formulas (H1--H11) in the next theorem respectively.

\begin{Theorem}\label{th:MainHDet_ord}
	We have the following formulas for the Hankel determinants.

\text{\rm (H1)}
The Hankel  determinants of $(E_0^{(r)}, 0, E_2^{(r)}, 0, E_4^{(r)}, \ldots)$ are
$$
H_n = \prod_{k=1}^{n-1} k! r(r+1)(r+2)\cdots (r+k-1).
$$
In particular, when $r=1$,
the Hankel determinants of $(E_0, 0, E_2, 0, E_4, \ldots)$ are
$$
H_n = \prod_{k=1}^{n-1} k!^2. 
$$

\text{\rm	(H2)}
The Hankel determinants of $(E_0^{(r)}, E_2^{(r)}, E_4^{(r)}, \ldots)$ are
$$
H_n= \prod_{k=1}^{n-1} (2k)! r(r+1)\cdots (r+2k-1).
$$
In particular, when $r=1$, 
the Hankel determinants of $(E_0, E_2, E_4, \ldots)$ are
$$
H_n= \prod_{k=1}^{n-1} (2k)!^2. 
$$

\text{\rm (H3)}
The Hankel determinants of $(E_2^{(r)}, E_4^{(r)}, \ldots)$ are
$$
	H_n= \prod_{k=0}^{n-1} (2k+1)! r(r+1)\cdots (r+2k).
$$
In particular, when $r=1$, 
the Hankel determinants of $(E_2, E_4, \ldots)$ are
$$
H_n= \prod_{k=1}^{n-1} (2k+1)!^2 
$$

	\text{\rm	(H4)}
The Hankel determinants of
$(E_1, 0, E_3, 0, E_5, \ldots)$ are
$$
H_n = n!\prod_{k=1}^{n-1} k!^2.
$$

	\text{\rm	(H5)}
The Hankel determinant of
$(E_1,  E_3,  E_5, \ldots)$ are
$$
H_n = \prod_{k=1}^{2n-1} k!.
$$

	\text{\rm	(H6)}
The Hankel determinant of
$(E_3,  E_5, \ldots)$ are
$$
H_n = \prod_{k=1}^{2n} k!.
$$

	\text{\rm(H7)}
	The Hankel determinants of $(E_1, E_2, E_3, E_4, \ldots)$ are
$$
	H_n= \frac{n!}{2^{n(n-1)/2}} \prod_{k=2}^{n-1} k!^2
$$

	\text{\rm (H8)}
The Hankel determinants of  $(0, E_2^{(r)}/r, 0, E_4^{(r)}/r, \ldots )$ are
$$
H_{2n+1}=0, \quad
H_{2n}= (-1)^n \prod_{k=1}^{n-1} \left((2k+1)! (r+1)(r+2)\cdots (r+2k)\right)^2.
$$
Or equivalently, the Hankel determinants of  $(0, E_2^{(r)}, 0, E_4^{(r)}, \ldots )$ are
$$
H_{2n+1}=0, \quad
H_{2n}= (-1)^n r^2\prod_{k=1}^{n-1} \left((2k+1)! r(r+1)(r+2)\cdots (r+2k)\right)^2.
$$
In particular, when $r=1$, the Hankel determinants of  $(0, E_2, 0, E_4, \ldots )$ are
$$
H_{2n+1}=0, \quad
H_{2n}= (-1)^n \prod_{k=1}^{n-1} (2k+1)!^4.
$$

\text{\rm (H9)}
The Hankel determinants of  $(0, E_1, 0, E_3,0,E_5, \ldots )$ are
$$
H_{2n+1}=0, \quad
H_{2n}= (-1)^n \prod_{k=1}^{2n-1} k!^2.
$$

\text{\rm (H10)}
The Hankel determinants  of
	$(E_2,E_3, E_4,E_5, \ldots)$ 
	 are
\begin{align*}
	H_0&=1,\\
	H_{4k}&= \frac{ (-1)^{k}k^2(2k-1)!^2 }{ 2^{8k^2-4k-2} }
		\prod_{j=1}^{2k-1}(2j+1)!^4  \\
	H_{4k+1}&= \frac{ (-1)^{k}(2k)!^2 }{ 2^{8k^2} (4k+1)!^2 }
		\prod_{j=1}^{2k}(2j+1)!^4  \\
	H_{4k+2}&= \frac{ (-1)^{k}(2k+1)!^2 }{ 2^{8k^2+4k} }
		\prod_{j=1}^{2k}(2j+1)!^4  \\
		H_{4k+3}&=0
\end{align*}

	\text{\rm	(H11)}
The Hankel determinant of
$(0, E_3, 0, E_5, \ldots)$ are
$$
H_n = (-1)^n \prod_{k=1}^{2n} k!^2.
$$

\end{Theorem}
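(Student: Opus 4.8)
\medskip
\noindent\emph{Proof strategy.} The plan is purely mechanical: each determinant formula (H$j$) will be deduced from the corresponding continued fraction (F$j$) of Theorem~\ref{th:MainHFrac_ord} by feeding its coefficients into the fundamental evaluation of Theorem~\ref{th:super2}, and then simplifying a telescoping product. So the whole proof splits into eleven short, parallel computations; I will only indicate the shape of each.

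First I would dispatch the cases where (F$j$) is (or, after the even/odd contractions already carried out, has become) an ordinary $J$-fraction of the form \eqref{eq:Jacobi}, namely (F1), (F2), (F3), (F4), (F5), (F6), (F7). For these one simply reads off the sequence $(v_0,v_1,v_2,\ldots)$ by matching the numerators (here all denominators being of the form $1+u_jx$ is irrelevant, since $u_j$ does not enter the determinant) and applies Heilermann's relation \eqref{eq:detH}, $H_n=v_0^nv_1^{n-1}\cdots v_{n-1}$. The resulting products collapse via the elementary identities $\prod_{k=1}^{m}k^{\,m+1-k}=\prod_{j=1}^{m}j!$ and its shifted analogue $\prod_{k=1}^{m}(r+k-1)^{\,m+1-k}=\prod_{j=1}^{m}r(r+1)\cdots(r+j-1)$; for (F1) this already yields (H1), for (F2)--(F6) the $u$-free numerators read off from the stated patterns give (H2)--(H6), and (F7) gives (H7). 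One must only remember that a substitution $x\mapsto x^{2}$ inserts interlacing zeros (so that $H_{2n+1}=0$ there) and that the operation ``subtract $1$, divide by $x$'' shifts the underlying sequence, which is reflected in a one-step index shift of the product.

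For the genuine $H$-fractions (F8), (F9), (F10), (F11) I would instead use formula \eqref{eq:HankelDetFundamental}. The first step is to identify, by comparison with \eqref{eq:super} for $\delta=2$, both the constants $v_j$ and the exponent pattern $(k_0,k_1,k_2,\ldots)$; then $s_j=k_0+\cdots+k_{j-1}+j$ and $\epsilon_j=\sum_{i<j}k_i(k_i+1)/2$ are explicit. For (F8), (F9) and (F11) the pattern is $(k_j)=(1,1,1,\ldots)$, forcing $s_j=2j$ and $\epsilon_j=j$, which is exactly why $H_{2n+1}=0$ and $H_{2n}=(-1)^{n}v_0^{2n}v_1^{2n-2}\cdots$ in (H8), (H9), (H11). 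For (F10) the pattern is $(k_j)=(0,1,0)^{*}$, word for word as in the proof of Theorem~\ref{th:En_Det}; this produces the residues modulo $4$ and the vanishing $H_{4k+3}=0$ appearing in (H10). In each instance, substituting $(v_j),(s_j),(\epsilon_j)$ into \eqref{eq:HankelDetFundamental} and telescoping the product over the periodic block yields the closed form.

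The only genuinely delicate part — and where I expect the bulk of the work to lie — is the final algebraic simplification and sign/power-of-two bookkeeping: (i) the index conventions (the first data $a_1,b_0,b_1$ are always exceptional, so the telescoping product must be started at a shifted value), (ii) matching the sign $(-1)^{\epsilon_j}$ and the factors $2^{8k^{2}-\cdots}$ against the stated formulas, which requires treating the residue classes of $n$ separately, and (iii) for the series obtained by $x\mapsto x^2$ or by ``subtract $1$, divide by $x$'' (as in (F3), (F6), (F7), (F8), (F11)), checking that the transformed continued fraction is still of the required super form so that Theorem~\ref{th:super2} applies. None of this needs a new idea beyond Theorem~\ref{th:super2}; it is a matter of organizing the computations carefully and verifying a couple of low-index cases by hand against the tables of values.
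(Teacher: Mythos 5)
Your proposal coincides with the paper's own (very terse) proof: the paper simply remarks that (H1)--(H11) follow from the fractions (F1)--(F11) by Theorem~\ref{th:super2} (Heilermann's relation \eqref{eq:detH} in the $J$-fraction cases (F1)--(F7), formula \eqref{eq:HankelDetFundamental} in the $H$-fraction cases (F8)--(F11)), which is exactly the computation you outline, including the pattern $(k_j)=(1,1,1,\ldots)$ forcing $H_{2n+1}=0$ for the sequences beginning with $0$. One small correction: for (F10) the exponent pattern is $(k_0,k_1,k_2,\ldots)=(0,0,1)^*$, not $(0,1,0)^*$ as in Theorem~\ref{th:En_H}; this shifted pattern is precisely what gives the vanishing $H_{4k+3}=0$ (rather than $H_{4k+2}=0$) that you correctly state, so the slip does not affect the argument.
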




\section{The exponential generating functions of the Euler numbers}\label{sec:expgf} 
In this section we consider the exponential generating functions of the Euler numbers. They are $\tan(x), \sec(x), \tan(x)+\sec(x)$ and their variants.  
Although most continued fractions and Hankel determinants involving 
the Euler numbers are for the {\it ordinary} generating functions (see Section \ref{sec:ordgf}), a few of them are about the 
{\it exponential} generating functions. 
In 1761, Lambert \cite{Lambert1761} proved that $\pi$ is irrational by first 
deriving the following continued fraction expansion of 
$\tan(x)$
(see \cite[p. 349 (91.7)]{Wall1948})
\begin{equation}\label{eq:tan_Lambert}
	\tan(x)= 
	\cFrac{x}{1} - \cFrac {x^2}{3} -\cFrac {x^2}{5} -\cFrac {x^2}{7}  -\cdots
\end{equation}
or \cite[p. 349 (91.6)]{Wall1948} \footnote{\,There are two typos in the equalities (91.6). 
The middle side
$\frac{z\Psi(\frac 32; \frac z4)^2}{\Psi(\frac 32; \frac z4)^2} $
should be
$\frac{z\Psi(\frac 32; \frac {z^2}{4})}{\Psi(\frac 32; \frac {z^2}{4})}$.}
\begin{equation}\label{eq:tanh_Lambert}
	\tanh(x)= 
	\cFrac{x}{1} + \cFrac {x^2}{3} +\cFrac {x^2}{5} +\cFrac {x^2}{7}  +\cdots
\end{equation}
Also, Hankel determinants of the Euler numbers divided by the factorial numbers $E_n/n!$ are studied in \cite{Morales2017PakII}.

We have seen the Flajolet continued fraction \eqref{eq:Flajolet3A} for the ordinary generating function
of the  quadruple statistic $(\val, \pk, \da, \dd)$ \cite{Flajolet1980},
and the Carlitz-Scoville exponential generating function \eqref{eq:CarlitzScoville}  for the same statistic \cite{Carlitz1974Scoville}. The next
continued fraction for their exponential generating function seems to be new.
\begin{Theorem}\label{th:Exp_Fraction}
We have the following
 continued fraction of the exponential generating function
for the quadruple statistic $(\val, \pk, \da, \dd)$: 
	\begin{multline}\label{eq:Exp_Fraction}
		\sum_{n\geq 1} \frac {x^{n}}{n!} \sum_{\sigma \in \mathfrak{S}_{n}}
	u_1^{\val(\sigma)} u_2^{\pk(\sigma)} u_3^{\da(\sigma)} u_4^{\dd(\sigma)} \\
		=\cFrac{u_1 x}{1-cx} + \cFrac{{(c^2-u_1u_2)}x^2}{3} 
		+\cFrac{{(c^2-u_1u_2)} x^2}{5} 
		+\cFrac{{(c^2-u_1u_2)} x^2}{7} 
		+ \cdots
	\end{multline}
	where $c=(u_3+u_4)/2$.
\end{Theorem}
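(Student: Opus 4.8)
The plan is to evaluate the exponential generating function on the left of \eqref{eq:Exp_Fraction} in closed form by means of the Carlitz--Scoville identity \eqref{eq:CarlitzScoville}, to recognize the result as a simple rational expression in $\tanh$, and then to read off the continued fraction from Lambert's classical expansion \eqref{eq:tanh_Lambert}. (In contrast with the ordinary generating function, for which Flajolet's Theorem \ref{th:Flajolet3A} is available, the exponential generating function appears to require this analytic detour; cf.\ Remark 4.) First I would set $c=(u_3+u_4)/2$ and let $d$ be a square root of $c^2-u_1u_2$, so that the numbers $\alpha_1,\alpha_2$ occurring in \eqref{eq:CarlitzScoville}, characterized by $\alpha_1+\alpha_2=u_3+u_4$ and $\alpha_1\alpha_2=u_1u_2$, are $\alpha_1=c-d$ and $\alpha_2=c+d$. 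Substituting these into the right-hand side of \eqref{eq:CarlitzScoville}, factoring $e^{cx}$ out of both numerator and denominator, rewriting the remaining exponentials through $\sinh(dx)$ and $\cosh(dx)$, and finally dividing by $\cosh(dx)$, I would obtain
\begin{equation*}
	\sum_{n\geq 1}\frac{x^{n}}{n!}\sum_{\sigma\in\mathfrak{S}_n}u_1^{\val(\sigma)}u_2^{\pk(\sigma)}u_3^{\da(\sigma)}u_4^{\dd(\sigma)}=\frac{u_1\tanh(dx)}{d-c\tanh(dx)}=\frac{u_1(\tanh(dx)/d)}{1-c(\tanh(dx)/d)}.
\end{equation*}

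It then remains to match the right-hand side of \eqref{eq:Exp_Fraction} with this closed form. Let
\begin{equation*}
	R=\cFrac{(c^2-u_1u_2)x^2}{3}+\cFrac{(c^2-u_1u_2)x^2}{5}+\cFrac{(c^2-u_1u_2)x^2}{7}+\cdots,
\end{equation*}
so that the entire right-hand side of \eqref{eq:Exp_Fraction} equals $\dfrac{u_1x}{(1-cx)+R}$. Replacing $x$ by $dx$ in Lambert's expansion \eqref{eq:tanh_Lambert} gives $\tanh(dx)=\dfrac{dx}{1+R}$, whence $1+R=\dfrac{dx}{\tanh(dx)}$, and therefore
\begin{equation*}
	(1-cx)+R=\frac{dx}{\tanh(dx)}-cx=x\cdot\frac{d-c\tanh(dx)}{\tanh(dx)}.
\end{equation*}
Consequently $\dfrac{u_1x}{(1-cx)+R}=\dfrac{u_1\tanh(dx)}{d-c\tanh(dx)}$, which is exactly the closed form found above, so \eqref{eq:Exp_Fraction} holds.

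The one point I would treat with care, and which I regard as the main (if mild) obstacle, is the status of the formal square root $d$. The key observation is that $d$ occurs in every formula above only through $d^2=c^2-u_1u_2$: the series $\tanh(dx)/d=x-\tfrac13(c^2-u_1u_2)x^3+\cdots$, the series $\cosh(dx)$, and the partial numerators $d^2x^2$ all lie in $\mathbb{Q}[u_1,u_2,u_3,u_4][[x]]$, and after the displayed cancellations no bare $d$ survives. It therefore suffices to verify \eqref{eq:Exp_Fraction} over the Zariski-dense locus $c^2\neq u_1u_2$, where $d$ may be taken nonzero and every intermediate denominator (notably $\alpha_2e^{\alpha_1x}-\alpha_1e^{\alpha_2x}$, with constant term $2d$, and $d-c\tanh(dx)$, with constant term $d$) is an invertible power series; since the two sides of \eqref{eq:Exp_Fraction} have coefficients polynomial in $u_1,u_2,u_3,u_4$, agreement on that locus forces agreement everywhere, including on $c^2=u_1u_2$, where $R=0$ and $\tanh(dx)/d=x$. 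Granting this, there should be no further difficulty: the passage from \eqref{eq:CarlitzScoville} to the closed form is a one-line algebraic simplification, and the continued-fraction step is the short tail computation above, resting entirely on Lambert's \eqref{eq:tanh_Lambert}.
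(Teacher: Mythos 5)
Your proposal is correct and follows essentially the same route as the paper: apply the Carlitz--Scoville formula, rewrite the closed form in terms of $\tanh(\tau x)$ with $\tau^2=c^2-u_1u_2$ (your $d$ is the paper's $\tau$), and then invoke Lambert's expansion \eqref{eq:tanh_Lambert} to produce the continued fraction. The only difference is that you make explicit the (harmless) point that the square root $d$ enters only through $d^2$, which the paper handles silently by substituting $\tau^2=c^2-u_1u_2$ at the end.
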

\begin{proof}
	Let $F(x)$ be the left-hand side of \eqref{eq:Exp_Fraction}.
By using Carlitz-Scoville formula \eqref{eq:CarlitzScoville},
with $u_3+u_4=\alpha_1+\alpha_2$, $u_1u_2=\alpha_1 \alpha_2$,
	and $\tau=(\alpha_1-\alpha_2)/2$,
we have
	\begin{equation*}
\begin{split}
	F(x)
	&=\frac {u_1}{x} \cdot \frac {e^{\alpha_2 x} - e^{\alpha_1 x}}{\alpha_2 e^{\alpha_1 x} - \alpha_1 e^{\alpha_2 x}},\\
	&=\frac{u_1}{x}\cdot \frac {e^{\tau x} - e^{-\tau x}}{
		\tau (e^{\tau x} + e^{-\tau x}) - c ( e^{\tau x} - e^{-\tau x}) },\\
	&=\frac {u_1}{
		\frac{\tau x}{\tanh(\tau x)} - c x }.
\end{split}
\end{equation*}
By using Lambert's continued fraction \eqref{eq:tan_Lambert}, we obtain
	\begin{equation}\label{eq:FourStatsExp_ContFrac}
	F(x)=\cFrac{u_1}{1-cx}
	+\cFrac{\tau^2x^2}{3}
	+\cFrac{\tau^2x^2}{5}
	+\cFrac{\tau^2x^2}{7} + \cdots
	\end{equation}
	Replace $\tau^2$ by $(\alpha_1-\alpha_2)^2/2^2 = c^2-u_1u_2$,
	we obtain \eqref{eq:Exp_Fraction}.
\end{proof}

Comparing  \eqref{eq:Exp_Fraction} and \eqref{eq:Flajolet3A},
we can roughly say that the {\it formal Laplace transformation} converts the continued
fraction on the right-hand side of \eqref{eq:Exp_Fraction} to 
the continued fraction on the right-hand side of \eqref{eq:Flajolet3A}.

\begin{Theorem}\label{th:MainHFrac_exp}
	We have the following Hankel fraction expansions.

\begin{equation}\tag{F12}
	\sum_{n\geq 0} E_{2n+1}\frac{x^{2n}}{(2n+1)!}= 
	\cFrac{1}{1} - \cFrac{\frac{1}{1\cdot 3}x^2}{1} 
	- \cFrac{\frac{1}{3\cdot 5}x^2}{1} - \cFrac{\frac{1}{5\cdot 7}x^2}{1} -\cdots
\end{equation}

\begin{equation}\tag{F13}
	\sum_{n\geq 0} E_{2n+1}\frac{x^{n}}{(2n+1)!}
	=\cFrac{1}{1-\frac{1}{3}x} 
	- \cFrac{\frac{1}{1\cdot 3^2\cdot 5}x^2}{1-\frac{2}{3\cdot 7}x} 
	- \cFrac{\frac{1}{5\cdot 7^2 \cdot 9}x^2}{1-\frac{2}{7\cdot 11}x} 
+ \cdots
	\end{equation}

\begin{equation}\tag{F14}
	\sum_{n\geq 0} E_{2n+3}\frac{x^{2n+1}}{(2n+3)!}
	=\cFrac{\frac 13 x}{1-\frac{2}{1\cdot 5}x^2} 
	- \cFrac{\frac{1}{3\cdot 5^2\cdot 7}x^4}{1-\frac{2}{5\cdot 9}x^2} 
	- \cFrac{\frac{1}{7\cdot 9^2 \cdot 11}x^4}{1-\frac{2}{9\cdot 11}x^2} 
	- \cdots
\end{equation}

\begin{equation}\tag{F15}
	\tan(x)=\cFrac{x}{1-\frac{1}{3}x^2} 
	- \cFrac{\frac{1}{1\cdot 3^2\cdot 5}x^4}{1-\frac{2}{3\cdot 7}x^2} 
	- \cFrac{\frac{1}{5\cdot 7^2 \cdot 9}x^4}{1-\frac{2}{7\cdot 11}x^2} 
	- \cFrac{\frac{1}{9 \cdot 11^2\cdot 13}x^4}{1-\frac{2}{11\cdot 15}x^2 } 
	- \cdots
\end{equation}

	\begin{equation}\tag{F16}
	\sum_{n\geq 0}{E_{n+1}\frac{x^{n}}{(n+1)!}} 
= \cFrac{1}{1-\frac{1}{2}x} 
- \cFrac{\frac{1}{2\cdot 6}x^2}{1} 
- \cFrac{\frac{1}{6\cdot 10}x^2}{1} 
- \cFrac{\frac{1}{10\cdot 14}x^2}{1} 
-\cdots
	\end{equation}

\begin{equation}\tag{F17}
\tan(x)+\sec(x)=
\cFrac{1}{1-x } 
+ \cFrac{\frac{1}{2}x^2}{1+\frac{2}{3}x } 
+ \cFrac{\frac{1}{6^2}x^2}{1-\frac{4}{3\cdot 5}x } 
+ \cFrac{\frac{1}{10^2}x^2}{1+\frac{6}{5\cdot 7}x} 
+ \cdots
\end{equation}

	\begin{equation}\tag{F18}
	\sum_{n\geq 0}{E_{n+2}\frac{x^{n}}{(n+2)!}} 
	=\cFrac{\frac{1}{2}}{1-\frac{2}{3}x } 
+ \cFrac{\frac{1}{6^2}x^2}{1+\frac{4}{3\cdot 5}x } 
+ \cFrac{\frac{1}{10^2}x^2}{1-\frac{6}{5\cdot 7}x} 
+ \cFrac{\frac{1}{14^2}x^2}{1+\frac{8}{7\cdot 9}x} 
+ \cdots
\end{equation}
\begin{equation}\tag{F19}
		\sum_{n\geq 0}{E_{n+3}\frac{x^{n}}{(n+3)!}}
		= \cFrac{ \frac {1}{3}}{1-\frac{5}{8}x} 
			- \cFrac{\frac{3}{320}x^2}{1+\frac{13}{72}x} 
		-\cFrac{\frac{16}{2835}x}{1-\frac{25}{288}x}  - \cdots\qquad\qquad
	\end{equation}
The general patterns of the coefficients $a_j$ and $b_j$ are:
\begin{equation*}
	a_1= \frac {1}{3},\
	a_{j} = -\frac{(j-1)^2(j+1)^2x^2}{4j^4(2j-1)(2j+1)};\
	b_0=0,\
	b_{j} = 1+ (-1)^j \frac{(2j^2+2j+1) x}{2j^2(j+1)^2}.
\end{equation*}
	\begin{equation}\tag{F20}
		\sum_{n\geq 0}{E_{n+4}\frac{x^{n}}{(n+4)!}}
		=
		 \cFrac{\frac{5}{24}}{1 - \frac{16}{25}x} 
		 + \cFrac{\frac{11}{3750}x^2}{1 + \frac{432}{1925}x} 
		 + \cFrac{\frac{475}{142296}x^2}{1 - \frac{640}{4389}x} 
		 + \cdots\qquad
\end{equation}
The general patterns for the coefficients $a_j$ and $b_j$ are:
\begin{equation*}
\begin{gathered}
a_1=\frac {5}{24}, \quad
a_j=\frac{{\left(j^{2} + 3 \, j + 1\right)} {\left(j^{2} - j - 1\right)} {\left(j + 2\right)} {\left(j - 1\right)} \cdot x^2}{4 \, {\left(j^{2} + j - 1\right)}^{2} {\left(2 \, j + 1\right)}^{2} {\left(j + 1\right)} j} ;
\\
b_0=\frac{1}{3},\quad
b_j=1+\frac{2 \, \left(-1\right)^{j} {\left(j + 2\right)} {\left(j + 1\right)}^{3} j\cdot x}{{\left(j^{2} + 3 \, j + 1\right)} {\left(j^{2} + j - 1\right)} {\left(2 \, j + 3\right)} {\left(2 \, j + 1\right)}}.
\end{gathered}
\end{equation*}
\begin{equation}\tag{F21}
	\sum_{n\geq 0} E_{2n+3}\frac{x^{2n}}{(2n+3)!}
	=\cFrac{\frac 13 }{1} 
	- \cFrac{\frac{2}{5}x^2}{1} 
	- \cFrac{\frac{1}{210}x^2}{1} 
	- \cFrac{\frac{5}{126}x^2}{1} 
	- \cdots
	\qquad
\end{equation}
The general patterns for the coefficients $a_j$  are:
\begin{equation*}
\begin{gathered}
	a_{2k} = \frac{-(2k+1)(k+1) }{(4k-1)(4k+1)(2k-1)k}, \\
	a1=\frac 13; \quad a_{2k+1} = \frac{-k(2k-1)}{(4k+1)(4k+3)(2k+1)(k+1)}.
\end{gathered}
\end{equation*}
\begin{equation}\tag{F22}
	\sum_{n\geq 0} E_{2n+3}\frac{x^{n}}{(2n+3)!}
	=\cFrac{\frac 13 }{1-\frac{2}{1\cdot 5}x} 
	- \cFrac{\frac{1}{3\cdot 5^2\cdot 7}x^2}{1-\frac{2}{5\cdot 9}x} 
	- \cFrac{\frac{1}{7\cdot 9^2 \cdot 11}x^2}{1-\frac{2}{9\cdot 11}x} 
	- \cdots
\end{equation}
\begin{equation}\tag{F23}
	\sum_{n\geq 0} E_{2n+5} \frac{x^n}{(2n+5)!}=
	\cFrac{ \frac {2}{15}}{1 - \frac {17}{42}x}
	-	\cFrac{ \frac {1}{5292}x^2}{1 - \frac {101}{2310}x}
	-	\cFrac{ \frac {56}{1061775}x^2}{1 - \frac {73}{4620}x}
- \cdots
\end{equation}
The general patterns for the coefficients $a_j$ and $b_j$ are:
\begin{equation*}
\begin{gathered}
a_1=\frac {2}{15}, \quad
a_j=
-\frac{{\left(2 \, j + 1\right)} {\left(2 \, j - 3\right)} {\left(j + 1\right)} {\left(j - 1\right)}x^2}{{\left(4 \, j + 1\right)} {\left(4 \, j - 1\right)}^{2} {\left(4 \, j - 3\right)} {\left(2 \, j - 1\right)}^{2} j^{2}};
\\
b_0=0,  \quad
b_j=1-
\frac{(8 \, j^{4} + 8 \, j^{3} + 22 \, j^{2} + 10 \, j + 3)x}{{\left(4 \, j + 3\right)} {\left(4 \, j - 1\right)} {\left(2 \, j + 1\right)} {\left(2 \, j - 1\right)} {\left(j + 1\right)} j}.
\end{gathered}
\end{equation*}
\begin{equation}\tag{F24}
	\sum_{n\geq 0} E_{2n+7} \frac{x^n}{(2n+7)!}=
	\cFrac{ \frac {17}{315} }{1 - \frac{62}{153}x}
	-	\cFrac{ \frac {26}{1287495}x^2}{1 - \frac {1150}{25857}x}
- \cdots
\qquad\qquad
\end{equation}
The general patterns for the coefficients $a_j$ and $b_j$ are:
\begin{equation*}
	\begin{gathered}
a_j=
\frac{-{\left(4 \, j^{2} + 10 \, j + 3\right)} {\left(4 \, j^{2} - 6 \, j - 1\right)} {\left(2 \, j + 3\right)} {\left(2 \, j - 3\right)} {\left(j + 2\right)} {\left(j - 1\right)}x^2}{{\left(4 \, j^{2} + 2 \, j - 3\right)}^{2} {\left(4 \, j + 3\right)} {\left(4 \, j + 1\right)}^{2} {\left(4 \, j - 1\right)} {\left(2 \, j + 1\right)} {\left(2 \, j - 1\right)} {\left(j + 1\right)} j};
\\
a_1=\frac {17}{315}; \quad
b_0=0,  \
b_j=1-
\frac{2 \, {\left(16 \, j^{4} + 48 \, j^{3} + 164 \, j^{2} + 192 \, j + 45\right)}x}{{\left(4 \, j^{2} + 10 \, j + 3\right)} {\left(4 \, j^{2} + 2 \, j - 3\right)} {\left(4 \, j + 5\right)} {\left(4 \, j + 1\right)}}.
	\end{gathered}
\end{equation*}
\end{Theorem}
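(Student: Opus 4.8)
\emph{Overall plan.}
The statement is a catalogue of twelve continued fractions, so the proof is a catalogue of reductions driven by just two ``root'' expansions. The first root is Lambert's fraction \eqref{eq:tan_Lambert} for $\tan x$. The second is the specialization of Theorem~\ref{th:Exp_Fraction} at $u_1=1,\ u_2=1/2,\ u_3=0,\ u_4=1$, so that $c=1/2$ and $c^2-u_1u_2=-1/4$; since $E_n=P_n(1/2,0)$ by Theorem~\ref{th:EnComb}, this is exactly a continued fraction for $\tan x+\sec x-1=\sum_{n\ge1}\frac{E_n}{n!}x^n$, namely
\[
\tan x+\sec x-1=\cFrac{x}{1-x/2}+\cFrac{-x^2/4}{3}+\cFrac{-x^2/4}{5}+\cdots .
\]
From these two seeds everything else is reached by the value-preserving manipulations already available in the paper: equivalence transformations, the even/odd/chop/haircut contraction formulas, the substitution $x\mapsto x^2$ and its inverse, and the elementary \emph{shift} ``subtract the constant term, then divide by $x$'', which on a fraction whose leading partial numerator carries a factor $x$ merely deletes the constant term and strips that factor.

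\emph{The derivation tree.}
In the \emph{tangent} branch: (F15) $=\tan x$ is the even contraction \eqref{eq:EvenContraction} of \eqref{eq:tan_Lambert} followed by an equivalence transformation normalizing the leading coefficients to $1$; (F12) $=\tan x/x$ comes from \eqref{eq:tan_Lambert} by dividing by $x$ and an equivalence transformation; (F13) $=\tan(\sqrt x)/\sqrt x=\sum e_{2n+1}x^n$ comes from (F12) by the substitution $x^2\mapsto x$ followed by an even contraction; and (F14), (F21), (F22), (F23), (F24) --- the various shifted odd-index series such as $(e_3,0,e_5,\ldots)$ and $(e_3,e_5,\ldots)$ --- are obtained from (F12)/(F13)/(F14) by the shift operation together with the change of variables $x^2\mapsto x$, a haircut \eqref{eq:Haircut} being inserted wherever a leading partial numerator must first be given a factor $x$, and a run of chop contractions \eqref{eq:chop} wherever the outcome must be brought back to super-$\delta$-fraction form; formulas (F22), (F23) recover (H22), (H23) of Morales--Pak (cf.\ Remark~2). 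In the \emph{tangent$+$secant} branch: an equivalence transformation and a division by $x$ applied to the second root give (F16) $=(\tan x+\sec x-1)/x$ (which is \cite[(3.9)]{Sokal2019EM}); reincorporating the constant term --- add $1$, apply a haircut, then an equivalence/even contraction into $J$-fraction form --- gives (F17) $=\tan x+\sec x$; and (F18), (F19), (F20) are produced from (F17) by iterating ``haircut, then shift'', each iteration closed off by the chop contractions needed to return to a super-$\delta$-fraction.

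\emph{Verifying the coefficients.}
For each formula the patterns for $a_j$ and $b_j$ are explicit rational functions of $j$ --- innocuous in the tangent branch, heavy for the deep shifts (F19), (F20), (F23), (F24). Every operation above acts on the coefficient sequences by a closed-form rule (even contraction by \eqref{eq:EvenContraction}, chop at a prescribed spot by \eqref{eq:chop}, haircut by \eqref{eq:Haircut}, equivalence by rescaling, shift by deletion), so the verification reduces to substituting the already-established coefficients of the preceding fraction into that rule and simplifying, with a short induction on $j$ along the eventually periodic index pattern. For the formulas that pass through intermediate objects which are neither $S$- nor $J$-fractions --- again (F19), (F20), (F23), (F24) --- I would proceed exactly as in the proofs of Theorem~\ref{th:En_super1} and of (F10) in Section~\ref{sec:proofs}: write down an \emph{ansatz} continued fraction with guessed coefficients, prove by a telescoping chain of chop contractions that it contracts to a fraction already established, and then read off the claimed $H$- or $J$-fraction by one final even or odd contraction. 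The companion Hankel-determinant formulas (H12)--(H24) then follow by feeding the resulting fractions into Theorem~\ref{th:super2} (or into \eqref{eq:detH} for those that are $J$-fractions).

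\emph{Where the difficulty lies.}
Conceptually the theorem is light --- it rests on Theorem~\ref{th:Exp_Fraction}, Lambert's expansion, and the contraction toolkit --- so the real work is organizational: (i) for each deep shift, choosing the exact positions at which to apply the chop contractions so that the output is genuinely a super-$\delta$-fraction in the sense of Definition~\ref{def:super}, i.e.\ so that the exponents $k_j$ in \eqref{eq:super} follow the predicted pattern; and (ii) keeping the increasingly bulky rational-function coefficients under control through the iterated substitutions. As in Section~\ref{sec:proofs}, the practical route is to \emph{guess} the intermediate non-super fraction first --- guided by the required shape of the $k_j$ --- and only then confirm it by contracting down to something already proved, rather than pushing the manipulations forward blindly.
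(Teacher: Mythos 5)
Your overall route is the paper's route: the tangent branch grows out of Lambert's fraction \eqref{eq:tan_Lambert} by equivalence transformations, the substitution $x^2\leftrightarrow x$, and even/odd contractions (this is exactly how (F12)--(F15), (F21)--(F23) are handled), and the $\tan+\sec$ branch is seeded by the specialization $u_1=1,u_2=1/2,u_3=0,u_4=1$ of Theorem~\ref{th:Exp_Fraction}, giving (F16); the deeper formulas are then reached, as you say, by guessing an intermediate $S$-fraction and confirming it by a contraction onto something already proved, then reading off the new $J$-fraction from the other-parity contraction plus ``subtract the constant, divide by $x$'' (this is precisely what the paper does with (f17)$\to$(F17), (f18)$\to$(F19), (f19)$\to$(F20), (f22)$\to$(F23), (f23)$\to$(F24); the verifications are single even/odd contractions rather than chop chains, but that is a cosmetic difference).

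The genuine gap is (F18), and through it the anchoring of (F19) and (F20). The even/odd contraction pair of one $S$-fraction only gives you the $J$-fractions of a sequence and of its shift by \emph{one}; (F18) is the shift by \emph{two} of (F16)/(F17), and your proposed ``haircut, then shift'' applied to the $J$-fraction (F17) does not produce it: the haircut \eqref{eq:Haircut} only peels a constant off the head, and after one such peel the tail is no longer in a form on which ``subtract the constant term and divide by $x$'' acts by simple deletion, nor is it the generating function of the shifted sequence (the tail of a $J$-fraction is not the shifted series). Moreover you explicitly list only (F19), (F20), (F23), (F24) as the cases to be settled by the ansatz-and-contract method, so (F18) is left without an anchor -- yet (F19) needs (F18) (the ansatz (f18) is certified because its even contraction reproduces (F18)), and (F20) needs (F19). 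The paper fills exactly this hole with a separate, non-contraction argument: writing $F(x)=\tan x+\sec x$ and $G(x)=((F(x)-1)/x-1)/x$, the claim (F18) is equivalent, in view of (F17), to $F(-x)=1/(1+x+x^2G(x))$, which holds because $F(x)F(-x)=1$. Without this reflection identity (or some substitute, e.g.\ first establishing the $S$-fraction of $(e_1,e_2,\dots)$ whose even contraction is (F16) and whose odd contraction yields (F18) -- a step you never propose), your chain for the $\tan+\sec$ branch stops at (F17).
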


\begin{proof}
(F12) Apply the equivalence transformations 
\cite[p. 19]{Wall1948}
on the Lambert continued fraction \eqref{eq:tan_Lambert} and divide by $x$.

(F13)
Replace $x^2$ by $x$ in (F12) we get
\begin{equation}\tag{f13}
	\sum E_{2n+1}\frac{x^{n}}{(2n+1)!}= 
	\cFrac{1}{1} - \cFrac{\frac{1}{1\cdot 3}x}{1} 
	- \cFrac{\frac{1}{3\cdot 5}x}{1} - \cFrac{\frac{1}{5\cdot 7}x}{1} -\cdots
\end{equation}
	Then, even contraction on (f13) yields (F13).

	(F14) Odd contraction on (F12); subtract by 1; divide by $x$.

	(F15) Replace $x$ by $x^2$ in (F13), and multiply by $x$. We get the $H$-fraction (F15).

(F16) 
By Theorem \ref{th:EnComb} with the weight  $W_2$, we have 
$E_n=P_n(1/2, 0)$. Hence, with the specialization $u_2=1/2, u_3=0, u_1=u_4=1$
	in Theorem \ref{th:Exp_Fraction},
	we have $c=1/2, c^2-u_1u_2= 1/4 - 1/2  = -1/4$.
Identity \eqref{eq:Exp_Fraction} becomes.
	\begin{equation*}
	\sum_{n\geq 1}{E_{n}\frac{x^{n}}{n!}} 
= \cFrac{x}{1-\frac{1}{2}x} 
- \cFrac{\frac{1}{4}x^2}{3} 
- \cFrac{\frac{1}{4}x^2}{5} 
- \cFrac{\frac{1}{4}x^2}{7} 
-\cdots
	\end{equation*}
Divide the above continued fraction by $x$, and 
normalize to $J$-fraction by 
equivalence transformations \cite[p. 19]{Wall1948}, 
we get (F16).

(F17)	We claim that
	\begin{equation}\tag{f17}
	\tan(x)+\sec(x)= 
\cFrac{1}{1} - \cFrac{x}{1} + \cFrac{\frac{1}{2}x}{1} + \cFrac{\frac{1}{6}x}{1} 
	- \cFrac{\frac{1}{6}x}{1} - \cFrac{\frac{1}{10}x}{1} 
  + \cdots
\end{equation}
The general patterns for the coefficients $a_j$ are:
\begin{equation*}
\begin{gathered}
a_1=1, \quad a_2=-x,
\\
a_{4k} = \frac{x}{8k-2},\quad
a_{4k+1} = -\frac{x}{8k-2},\quad
a_{4k+2} = -\frac{x}{8k+2},\quad
a_{4k+3} = \frac{x}{8k+2}.
\end{gathered}
\end{equation*}
	Odd contraction on (f17), subtract by $1$, divide by $x$, we get (F16). Hence, (f17) is true.
Even contraction on (f17) implies (F17).

(F18)
	Let $F(x)=\tan(x)+\sec(x)$ and $G(x)=((F(x)-1)/x-1)/x$.
	By (F17), it suffices to verify that
	$$
	F(-x)=\frac{1}{1+x+x^2G(x)}.
	$$
	This is true, since $F(x)F(-x)=1$.

(F19)
We have
	\begin{equation}\tag{f18}
		\sum_{n\geq 0}{E_{n+2}\frac{x^{n}}{(n+2)!}}
		=\cFrac{\frac{1}{2}}{1} - \cFrac{ \frac{2}{3} x}{1} +\cFrac{\frac{1}{24}x}{1} 
+ \cFrac{\frac{9}{40}x}{1}
		- \cdots
	\end{equation}
The	general patterns for the coefficients $a_j$ are:
\begin{equation*}
	a_1=1/2,\quad
	a_{2j} = \frac{(-1)^{j} (j+1)^2x}{2j^2(2j+1)},\quad
	a_{2j+1} = \frac{(-1)^{j+1} j^2x}{2(j+1)^2(2j+1)},
\end{equation*}
since, even contraction on (f18) implies (F18).
Now, odd contraction on (f18), subtract by $1/2$, divide by $x$, we get (F19).

	(F20)
	We have
	\begin{equation}\tag{f19}
		\sum_{n\geq 0}{E_{n+3}\frac{x^{n}}{(n+3)!}}
		=\cFrac{\frac{1}{3}}{1} - \cFrac{\frac{5}{8} x}{1} -\cFrac{\frac{3}{200}x}{1} + \cFrac{\frac{44}{225}x}{1}
		+ \cdots
	\end{equation}
The	general patterns for the coefficients $a_j$ are:
\begin{equation*}
\begin{gathered}
	a_{2j} = \frac{(-1)^{j} j(j+2)(j^2+3j+1)x}{2(j^2+j-1)(2j+1)(j+1)^2},\\
	a_1=1/3, \qquad 
	a_{2j+1} = \frac{(-1)^j j(j+2)(j^2+j-1)x}{2(j^2+3j+1)(2j+3)(j+1)^2},
\end{gathered}
\end{equation*}
since, even contraction on (f19) implies (F19).
Now odd contraction on (f19), subtract by $1/3$, divide by $x$, we get (F20).

(F21) Even contraction on (F21) is the same as (F14) divided by $x$. 

(F22) From (F14), divide by $x$, replace $x^2$ by $x$.

(F23) Replace $x^2$ by $x$ in (F21), we get the $S$-fraction:
\begin{equation}\tag{f22}
	\sum_{n\geq 0} E_{2n+3} \frac{x^n}{(2n+3)!}=
	\cFrac{ \frac {1}{3}}{1}
-	\cFrac{ \frac {2}{5}x}{1}
-	\cFrac{ \frac {1}{210}x}{1}
-	\cFrac{ \frac {5}{126}x}{1}
- \cdots
\end{equation}
For the general patterns see (F21).
Odd contraction on (f22), subtract by $1/3$, 
divide by $x$. We get (F23).

(F24) We claim the $S$-fraction:
\begin{equation}\tag{f23}
	\sum_{n\geq 0} E_{2n+5} \frac{x^n}{(2n+5)!}=
	\cFrac{ \frac {2}{15}}{1}
-	\cFrac{ \frac {17}{42}x}{1}
-	\cFrac{ \frac {1}{2142}x}{1}
-	\cFrac{ \frac {364}{8415}x}{1}
- \cdots
\end{equation}
The general patterns for the coefficients $a_j$ are:
\begin{equation*}
\begin{gathered}
	a_{2j} = \frac {- (4j+6)(4j+8)(4j^2+10j+3) x}{ (4j+1)(4j+2)(4j+3)(4j+4) 
(4j^2+2j-3)},\\
	a_1=\frac {2}{15},\qquad
	a_{2j+1} = \frac{-(4j-2)(4j)(4j^2+2j-3)   x}{(4j+2)(4j+3)(4j+4)(4j+5) (4j^2+10j+3   )}.
\end{gathered}
\end{equation*}
Even contraction on (f23), we get (F23). So that (f23) is true.
Now, odd contraction on (f23), subtract by $2/15$, 
divide by $x$. We get (F24).
\end{proof}

By using Theorem \ref{th:super2}, the Hankel continued fractions
(F12--F24) listed in Theorem \ref{th:MainHFrac_exp} imply the 
Hankel determinants formulas (H12--H24) in the next theorem respectively.

\begin{Theorem}\label{thMainHDet_exp}
	We have the following formulas for the Hankel determinants.

	\text{\rm (H12)}
The Hankel determinants of $(E_1/1!, 0 , E_3/3!, 0, E_5/5!, 0, \ldots)$ are
$$
H_0=1, \quad
H_n= 2^{(n-1)^2} \frac {(n-1)!}{(2n-1)!} 
\prod_{k=1}^{n-1} \frac {(k-1)!^2}{(2k-1)!^2}.
$$

	\text{\rm (H13)}
The Hankel determinants of $(E_1/1!,  E_3/3!, E_5/5!,  \ldots)$ are
$$
H_0=1, \quad H_n= 2^{(n-1)(2n-1)} \prod_{k=1}^{2n-2} \frac {k!}{(2k+1)!}.
$$

\text{\rm (H14)}
The Hankel determinants of $(0,  E_3/3!,0, E_5/5!,  \ldots)$ are
$$
H_{2n+1}=  0; \quad
H_{2n}= (-1)^n 
 2^{2n(2n-1)} \prod_{k=1}^{2n-1} \frac {k!^2}{(2k+1)!^2}.
$$

\text{\rm (H15)}
The Hankel determinants of $(0, E_1/1!,0,  E_3/3!,0, E_5/5!,  \ldots)$ are
$$
H_{2n+1}=  0; \quad
H_0=1,\quad
H_{2n}= (-1)^n 
 2^{2(n-1)(2n-1)} \prod_{k=1}^{2n-2} \frac {k!^2}{(2k+1)!^2}.
$$

\text{\rm (H16)}
The Hankel determinant of $(E_1/1!, E_2/2!, E_3/3!, E_4/4!, \ldots)$ are
$$
H_0=1, \quad H_n =  \frac{(n-1)!}{2^{n-1}(2n-1)!} \prod_{k=1}^{n-2} \frac {k!^2}{(2k+1)!^2}.
$$

\text{\rm (H17)}
The Hankel determinants of $(E_0/0!, E_1/1!, E_2/2!, E_3/3!, \ldots)$ are
$$
H_0=1, \quad H_n = (-1)^{n(n-1)/2}  \frac{1}{2^{n-1}} \prod_{k=2}^{n-1} \frac {(k-1)!^2}{(2k-1)!^2}.
$$

\text{\rm (H18)}
The Hankel determinant of $(E_2/2!, E_3/3!, E_4/4!, \ldots)$ are
$$
H_n = (-1)^{n(n-1)/2}  \frac{1}{2^{n}} \prod_{k=2}^{n} \frac {(k-1)!^2}{(2k-1)!^2}.
$$

\text{\rm (H19)}
The Hankel determinants of $(E_3/3!, E_4/4!, E_5/5!, \ldots)$ are
$$
H_n = \frac {(n+1)(n+1)!}{2^n(2n+1)!} \prod_{k=1}^{n-1} \frac {k!^2 }{(2k+1)!^2}
$$

\text{\rm (H20)}
The Hankel determinants of $(E_4/4!, E_5/5!, E_6/6!, \ldots)$ are
$$
H_n = (-1)^{n(n-1)/2} \frac {(n+1)(n+2)(n^2+3n+1)}{2^{n+1}}
\prod_{k=1}^{n}\frac {k!^2}{(2k+1)!^2}.
$$

\text{\rm (H21)}
The Hankel determinants of $(E_3/3!,0, E_5/5!,  \ldots)$ are
\begin{align*}
	H_{2n}&= 2^{4n^2-1} \frac{(2n+2)!}{(4n+1)!} \prod_{k=1}^{2n-1} \frac {k!^2}{(2k+1)!^2},\\
	H_{2n+1}&= 2^{4n(n+1)} \frac{(2n+1)(2n+2)!}{(4n+3)!} \prod_{k=1}^{2n} \frac {k!^2}{(2k+1)!^2}.
\end{align*}

\text{\rm (H22)}
The Hankel determinants of $(E_3/3!, E_5/5!, E_7/7!,  \ldots)$ are
$$
H_{n}= 2^{n(2n-1)}  \prod_{k=1}^{2n-1} \frac {k!}{(2k+1)!}.
$$

\text{\rm (H23)}
The Hankel determinants of $(E_5/5!, E_7/7!,  \ldots)$ are
$$
H_{n}= 2^{n(2n+1)}(n+1)(2n+1)  \prod_{k=1}^{2n} \frac {k!}{(2k+1)!}.
$$

\text{\rm (H24)}
The Hankel determinants of $(E_7/7!, E_9/9!, \ldots)$ are
$$
H_{n}= 2^{n(2n+3)} (2n+1)(4n^2+10n+3)
\frac{(n+1)(n+2)(2n+3)}{3}
\prod_{k=1}^{2n+1} \frac {k!}{(2k+1)!}.
$$
\end{Theorem}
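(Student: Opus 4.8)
The plan is to obtain each Hankel determinant formula (H$m$), $12\le m\le 24$, directly from the corresponding continued fraction (F$m$) of Theorem~\ref{th:MainHFrac_exp} by invoking Theorem~\ref{th:super2}. For a given $m$ the first move is to put (F$m$) into the canonical super $2$-fraction shape \eqref{eq:super}: reading the stated patterns for $a_j$ and $b_j$ one extracts the exponents $k_j$, the constants $v_j$, and the polynomials $u_j(x)$ (the $u_j$ never enter the Hankel determinant and can be ignored). For the eleven entries marked $J$ in Table~\ref{tab:2} --- (F12), (F13), (F16)--(F20), (F21)--(F24) --- all $k_j$ vanish, hence $s_j=j$ and $\epsilon_j=0$, and \eqref{eq:HankelDetFundamental} degenerates to Heilermann's relation \eqref{eq:detH}, so $H_n=v_0^{\,n}v_1^{\,n-1}\cdots v_{n-1}$ and only the sequence $(v_j)$ is needed. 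For the two entries marked $H$, namely (F14) and (F15), one checks from the degrees appearing in the numerators that $(k_0,k_1,k_2,\dots)=(1)^*$; then $s_j=2j$ and $\epsilon_j=j$, which at once accounts for the vanishing $H_{2n+1}=0$ and for the sign $(-1)^n$ in (H14) and (H15), the non-vanishing values being read off from \eqref{eq:HankelDetFundamental}.

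Concretely, for each formula I would: (i) from the pattern $a_{j+1}=-v_j x^2$ (or $a_{j+1}=-v_jx^4$ in the bilateral cases (F14), (F15)) write $v_j$ as an explicit rational function of $j$, respecting the \emph{index convention} for the exceptional leading value $v_0$ (and $v_1$ where it is listed separately); (ii) form the product $\prod_{j=0}^{n-1}v_j^{\,n-j}$ in the $J$-case, respectively $(-1)^{\epsilon_n}\prod_{j=0}^{n-1}v_j^{\,s_n-s_j}$ in the $H$-case; (iii) simplify this product into the closed form --- a product of factorials times a power of $2$, times a sign --- asserted in Theorem~\ref{thMainHDet_exp}. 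Since many members of the list are related to one another by equivalence transformations, even/odd and chop contractions, and the substitutions $x\mapsto x^2$ used in the proof of Theorem~\ref{th:MainHFrac_exp}, one need not carry out thirteen independent computations: whenever such a step links (F$m$) to (F$m'$), the two Hankel determinant sequences differ by an explicit and easily tracked multiplicative factor, so it suffices to treat one representative per family --- for instance (H12), (H13), (H15) first, then (H14), (H21)--(H24) from the contraction identities, and (H16)--(H20) from the $\tan(x)+\sec(x)$ fraction together with the reciprocal relation $F(x)F(-x)=1$ already exploited for (F18). One also notes a uniform sign rule that cuts down the bookkeeping: a $\text{`}-\text{'}$ between the terms of (F$m$) gives $v_j>0$ and no sign, whereas a $\text{`}+\text{'}$ gives $v_j<0$ and hence the factor $(-1)^{n(n-1)/2}=(-1)^{\sum_{j}(n-j)}$ seen in (H17), (H18), (H20).

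The part that takes real work, though it involves no new idea, is step (iii): converting $\prod_j v_j^{\,n-j}$, where $v_j$ is a quotient of products of linear factors such as $\frac{j^2(j+2)^2}{4(j+1)^4(2j+1)(2j+3)}$, into a single closed product of factorials. One must recognize the telescoping of the linear factors into expressions like $\prod_{k=1}^{n-1}\bigl((k-1)!/(2k-1)!\bigr)^{2}$ while keeping track of the exponent $n-j$, which weights each factor differently and is the source of the quadratic powers $2^{(n-1)^2}$, $2^{n(2n-1)}$, $2^{2n(2n-1)}$, and the like; the $4$'s and the odd factors $2j\pm1$ in the denominators must be rebalanced carefully to produce exactly the stated power of $2$. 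A subtler point deserving attention is the sign: in the $J$-cases a negative $v_j$ contributes $(-1)^{n-j}$, and in the $H$-cases the sign is governed by $\epsilon_j=\sum_{i<j}k_i(k_i+1)/2$, so an error in either place would corrupt the sign of infinitely many $H_n$. Once these simplifications are performed formula by formula, Theorem~\ref{thMainHDet_exp} follows.
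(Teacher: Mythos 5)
Your proposal is correct and takes essentially the same route as the paper: the paper obtains (H12)--(H24) from (F12)--(F24) by a single appeal to Theorem \ref{th:super2} (reducing to Heilermann's relation \eqref{eq:detH} in the $J$-cases), exactly as you describe, including the identification $(k_j)=(1)^*$, $s_j=2j$, $\epsilon_j=j$ for the two $H$-fractions (F14) and (F15), which yields the vanishing odd-order determinants and the sign $(-1)^n$. Your sign bookkeeping for the ``$+$'' fractions and the factorial simplification of $\prod_j v_j^{\,n-j}$ are consistent with the stated formulas, so nothing further is needed.
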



\section{New $q$-analog of the Euler numbers} \label{sec:q} 
There exist three kinds of $q$-analogs of the tangent and secant numbers, 
defined via (i) the $q$-sine and $q$-cosine functions introduced by Jackson \cite{Jackson1904, Foata2010Han, Andrews1978Gessel, Prodinger2011}; 
(ii) Lambert's continued fraction \eqref{eq:tan_Lambert} of $\tan(x)$ (see \cite{Fulmek2000, Prodinger2000});
(iii) The continued fractions \eqref{eq:E2n} and \eqref{eq:E2np1} of the ordinary generating functions of these numbers \cite{JosuatVerges2013Kim, Shin2010Zeng, Han1999RZeng, JosuatVerges2010}.

Let $[n]_q=1+q+q^2+\cdots +q^{n-1}, 
[n]!_q = [1]_q [2]_q \cdots [n]_q$ and 
$$\binom{n}{k}_q=\frac {[n]!_q}{[k]!_q [n-k]!_q}.$$
Version (iii) of the  $q$-secant and $q$-tangent numbers are
defined by
\begin{align}	
	\sum_{n\geq 0} \hat E_{2n}(q) x^{2n} &= 
	\cFrac {1}{1} - \cFrac{[1]_q^2x^2}{1}
	- \cFrac{[2]_q^2x^2}{1}- \cFrac{[3]_q^2x^2}{1} - \cdots \label{eq:qE2n} \\
	\sum_{n\geq 0} \hat E_{2n+1}(q) x^{2n+1} &= 
	\cFrac {x}{1} - \cFrac{[1]_q\cdot [2]_qx^2}{1}
	- \cFrac{[2]_q\cdot [3]_qx^2}{1}
- \cdots\label{eq:qE2np1}
\end{align}

In the same manner, we now use our theorem \ref{th:Enxn} to define a {\it new $q$-analog} of the Euler numbers as follows
\begin{equation}\label{eq:qEn}
	\sum_{n\geq 0} E_{n}(q) x^n =
	1+ \cFrac{x}{1-x } - \cFrac{\binom{2}{2}_qx^2}{1-[2]_qx } - 
	\cFrac{\binom{3}{2}_q x^2}{1-[3]_qx } 
	- \cFrac{\binom{4}{2}_q x^2}{1-[4]_qx } 
	-\cdots
\end{equation}
The general pattern for the coefficients $a_k$ and $b_k$ are: 
\begin{equation*}
	a_1=x,\quad      	a_k =-\binom{k}{2}_q x^2;           \qquad 
	b_0=1,\quad 	b_k = 1-[k]_qx .
\end{equation*}
The first values of $E_n(q)$ are listed below:
\begin{align*}
	E_0(q)&= E_1(q)=E_2(q)=1, \quad E_3(q)=2, \\
	E_4(q) &=  q + 4,  \\
	E_5(q) &=	 2q^2 + 5q + 9, \\
	E_6(q) &=  q^4 + 5q^3 + 14q^2 + 20q + 21.
\end{align*}
The specializations for $q=1,0,-1$ are
\begin{equation*}
	\begin{array}{*{13}c}
		n      &=& 0 & 1 & 2 & 3 & 4 & 5 & 6 & 7 & 8 & 9   \\
		\hline
		E_n(1)  &  =& 1 & 1  &1  &2 & 5  &16& 61& 272 & 1385& 7936  \\
		E_n(0)  &  =& 1 & 1  &1  &2 & 4  &9 & 21& 51 & 127 & 323  \\
		E_n(-1)  &  =& 1 & 1  &1  &2 & 3  &6& 11& 24 & 51& 122 \\
		\hline
\end{array}%
\end{equation*}
Of course $(E_n(1))_{n\geq 0}$ are just the Euler numbers.
Also, it is easy to see that $(E_{n}(0))_{n\geq 1}$ 
are the Motzkin numbers (see \cite[Proposition 5]{Flajolet1980},
\cite[p. 238]{Stanley1999EC2})
.
The most interesting case is $q=-1$. We have a non-trivial explicit formula for $E_n(-1)$, as stated next.\footnote{\,Bao-Xuan Zhu has informed me that
\eqref{eq:En_neg1_gf} is a consequence of a formula obtained by 
Paul Barry \cite[Prop. 8]{Barry2009}.
}

\begin{Theorem}\label{th:En_neg1}
	We have $E_0(-1)=1$ and
\begin{equation}\label{eq:En_neg1}
	E_n(-1)= \sum_{k=0}^{n-1} \binom{n-k-1}{k} k!
\end{equation}
or equivalently,
	\begin{equation}\label{eq:En_neg1_gf}
	\sum_{n\geq 0}   \sum_{k=0}^n \binom{n-k}{k} k! x^n 
	= \cFrac{1}{1-x} - \cFrac{x^2}{1} - \cFrac{x^2}{1-x} 
		- \cFrac{2x^2}{1} - \cFrac{2x^2}{1-x} 
		- \cdots
	\end{equation}
The general patterns for the new coefficients $a_j$ and $b_j$ are:
$$
a_1=1, \ 
	a_{2k} =-kx^2, \ 
	a_{2k+1} =-kx^2 ; \quad 
b_0=0, \ 
	b_{2k} = 1,  \ 
	b_{2k+1}= 1-x. 
$$
\end{Theorem}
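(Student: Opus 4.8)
The plan is to read off the $q=-1$ specialisation of the defining continued fraction \eqref{eq:qEn} and then to identify the resulting series in closed form through Euler's classical continued fraction for $\sum_{k\ge0}k!\,t^k$. First I would record the specialisation. Since $\binom{k}{2}_{q}$ and $[k]_q$ are polynomials in $q$, one may simply substitute $q=-1$; a short computation (the ``$q=-1$ phenomenon'') gives $\binom{k}{2}_{-1}=\lfloor k/2\rfloor$ and $[k]_{-1}=\tfrac12\bigl(1-(-1)^{k}\bigr)$, so the coefficients in \eqref{eq:qEn} become $a_1=x$, $a_{2j}=a_{2j+1}=-jx^2$, $b_0=1$, $b_{2j}=1$, $b_{2j+1}=1-x$; hence
\[
\sum_{n\ge0}E_n(-1)x^n
=1+\cFrac{x}{1-x}-\cFrac{x^2}{1}-\cFrac{x^2}{1-x}-\cFrac{2x^2}{1}-\cFrac{2x^2}{1-x}-\cdots .
\]
Subtracting $1$ and dividing by $x$ (an equivalence transformation) turns the right-hand side into exactly the continued fraction of \eqref{eq:En_neg1_gf}; consequently $E_0(-1)=1$ and $E_n(-1)=c_{n-1}$ for $n\ge1$, where $c_n:=\sum_{k=0}^{n}\binom{n-k}{k}k!$, and this $c_{n-1}$ is precisely the right-hand side of \eqref{eq:En_neg1}. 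Thus the two displays in the statement are equivalent, and it remains to evaluate the continued fraction $\Phi(x)$ on the right of \eqref{eq:En_neg1_gf}.

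The key step is an equivalence transformation of $\Phi(x)$ with multipliers $r_{2j}=1$ and $r_{2j+1}=(1-x)^{-1}$, which clears every denominator $1-x$ and, after extracting the common factor $(1-x)^{-1}$, yields
\[
\Phi(x)=\frac{1}{1-x}\,g\!\left(\frac{x^2}{1-x}\right),\qquad
g(t):=\cFrac{1}{1}-\cFrac{t}{1}-\cFrac{t}{1}-\cFrac{2t}{1}-\cFrac{2t}{1}-\cFrac{3t}{1}-\cdots .
\]
Here $g$ is Euler's classical continued fraction whose consecutive partial numerators are $t,t,2t,2t,3t,3t,\dots$, and its value as a formal power series is $g(t)=\sum_{k\ge0}k!\,t^k$ (see \cite{Wall1948}); alternatively, the even contraction formula of Section~\ref{sec:ContFrac} sends this $S$-fraction to the $J$-fraction $1/\bigl(1-t-t^2/(1-3t-4t^2/(1-5t-\cdots))\bigr)$ with $u_n=2n-1$ and $\lambda_n=n^2$, the standard $J$-fraction of $\sum_{k\ge0}k!\,t^k$, so one may instead quote the extension going the other way.

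It then remains to extract coefficients. Expanding,
\[
\frac{1}{1-x}\sum_{k\ge0}k!\left(\frac{x^2}{1-x}\right)^{k}
=\sum_{k\ge0}k!\,\frac{x^{2k}}{(1-x)^{k+1}}
=\sum_{k\ge0}k!\,x^{2k}\sum_{m\ge0}\binom{m+k}{k}x^{m}
=\sum_{n\ge0}\Bigl(\sum_{k\ge0}\binom{n-k}{k}k!\Bigr)x^{n},
\]
which is exactly the left-hand side of \eqref{eq:En_neg1_gf}; combined with the shift relation $E_n(-1)=c_{n-1}$ this proves \eqref{eq:En_neg1}. I do not expect a genuinely hard step here: the only delicate points are the $q=-1$ evaluation of the Gaussian binomials and keeping track of the multipliers in the equivalence transformation so that the denominators $1$ and $1-x$ land in the prescribed positions. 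Once the classical value of Euler's continued fraction is in hand, the remainder is a routine generating-function manipulation.
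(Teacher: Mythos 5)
Your proposal is correct, and it follows a genuinely different route from the paper's. You evaluate the continued fraction in \eqref{eq:En_neg1_gf} in closed form: after the $q=-1$ specialization of \eqref{eq:qEn} (your values $\binom{k}{2}_{-1}=\lfloor k/2\rfloor$ and $[k]_{-1}=\tfrac12\bigl(1-(-1)^k\bigr)$ are right, and the subtract-$1$, divide-by-$x$ step correctly reduces \eqref{eq:En_neg1} to \eqref{eq:En_neg1_gf}), an equivalence transformation with alternating multipliers $1$ and $(1-x)^{-1}$ rewrites the fraction as $\frac{1}{1-x}\,g\bigl(\frac{x^2}{1-x}\bigr)$, where $g(t)=\sum_{k\ge0}k!\,t^k$ is Euler's classical $S$-fraction with numerators $t,t,2t,2t,\ldots$; coefficient extraction then yields $\sum_k\binom{n-k}{k}k!$ at once. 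The paper instead embeds the fraction in a one-parameter family $F_u(x)$, guesses a Riccati-type differential equation for $F_u$, proves it by the iteration \eqref{eq:Su} (the defect $S_u$ is divisible by arbitrarily high powers of $x$, hence vanishes), reads off the recurrence \eqref{eq:Ennegrec} at $u=0$, and finishes with Zeilberger's algorithm. Your argument is shorter and more conceptual: it involves no guessing and no computer algebra, and it explains structurally why $\sum k!\,t^k$ appears; the paper's argument has the additional payoff of establishing Mathar's recurrence \eqref{eq:Ennegrec} en route (which the paper emphasizes as a conjecture being proved), whereas in your approach that recurrence would have to be derived separately, e.g. from the closed form of the generating function or by Zeilberger applied to the explicit sum. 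The only points to make explicit in a final write-up are the routine justifications you already flag: equivalence transformations whose multipliers are power series with nonzero constant term preserve the value of the fraction, and substituting $t=x^2/(1-x)$ (of valuation $2$) into the formal identity $g(t)=\sum_{k\ge0}k!\,t^k$ is legitimate in the $x$-adic topology.
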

\begin{proof}
	To prove identity \eqref{eq:En_neg1_gf}, we need to guess a unified
	property for 
	the following continued fraction with one more parameter $u$:
	\begin{equation*}
		F_u(x)
		= \cFrac{1}{1-x} - \cFrac{(u+1)x^2}{1} - \cFrac{(u+1)x^2}{1-x} 
		- \cFrac{(u+2)x^2}{1} - \cFrac{(u+2)x^2}{1-x} 
		- \cdots
	\end{equation*}
	It is clear that
	$F_0(x)$ is the continued fraction in \eqref{eq:En_neg1_gf}.
	We claim that $F_u(x)$ satisfies the following differential equation
\begin{equation*}
	(x-1)x^3 F_u'(x) -(x-1)(x-2)x^2 uF_u(x)^2  + (x-1)(2x^2+x-2) F_u(x) + (x-2)=0.
\end{equation*}
	Since we do not have efficient tools for guessing the above equation with the parameter $u$, our method consists of two steps:  first guessing formulas for specific values $u=0,1,2,3,\ldots$, then finding a unified pattern. This method was fully detailed with another example in \cite{Han2015NT}.

	To prove the above differential equation we let $S_u(x)$ be its left-hand side. 
	By using the following relation between $F_u(x)$ and $F_{u+1}(x)$
$$
	F_u(x)=\cfrac{1}{1-x-\cfrac{(u+1)x^2}{1-(u+1)x^2F_{u+1}(x)}},
$$
	we can express $S_u(x)$  in function of $F_{u+1}(x)$ and $F_{u+1}'(x)$. 
	After simplification we obtain
	\begin{equation}\label{eq:Su}
	S_u(x) = 
	\frac{(u + 1)^2x^4 S_{u+1}(x)  }{ 
	\left( (u + 1)(x - 1)x^2 F_{u+1}(x) +( 1-x- (u+1)x^2  )  \right)^2 }.
	\end{equation}
Since the constant term in $x$ of the denominator in the above fraction is equal to~$1$, applying relation \eqref{eq:Su} iteratively yields that $S_u(x)=0$.
Notice that we cannot prove that $S_u(x)=0$ by induction, since
	the base case for induction would be $S_\infty(x)=0$. This is impossible to prove because $F_\infty(x)$ does not exist.
Now take $u=0$. We have $S_0(x)=0$, or
	$$
	(x-1)x^3 F_0'(x)  + (x-1)(2x^2+x-2) F_0(x) + (x-2)=0.
	$$
	Write $\alpha_n:=E_{n+1}(-1)$ for short. Comparing the coefficient of $x^n$ in the above equation, we know that the $\alpha_n$'s satisfy the recurrence relation
	\begin{equation}\label{eq:Ennegrec}
	2\alpha_n = 3\alpha_{n-1} + (n-1)\alpha_{n-2} - (n-1)\alpha_{n-3},
	\end{equation}
	with initial values $\alpha_0=\alpha_1=1$ and $\alpha_2=2$.
	Finally we can prove \eqref{eq:En_neg1} by Zeilberger's 
algorithm \cite{Petkovsek1996etal}.
\end{proof}

{\it Remark.} 
Under the sequence A122852 in the OEIS \cite{OEIS:A122852},
formula \eqref{eq:En_neg1} 
is given by Paul Barry without proof and reference;
as well as recurrence \eqref{eq:Ennegrec} is stated by R. J. Mathar as a conjecture. The above proofs of \eqref{eq:En_neg1} and \eqref{eq:Ennegrec} are unexpectedly non-trivial. 

\smallskip

Applying Heilermann's formula \eqref{eq:detH} 
to the $J$-fraction \eqref{eq:En_neg1_gf} we 
obtain
$$
\det\left(\sum_{k=0}^{i+j}\binom{i+j-k}{k} k!\right)_{i,j=0}^{2n-1}
=\prod_{k=1}^n (k-1)!^3 k!
$$
and
$$
\det\left(\sum_{k=0}^{i+j}\binom{i+j-k}{k} k!\right)_{i,j=0}^{2n}
=\prod_{k=1}^n (k-1)! k!^3.
$$

\medskip

The continued fractions \eqref{eq:qE2n} and \eqref{eq:qE2np1} 
lead to several combinatorial interpretations of the polynomials 
$\hat E_{2n}(q)$ and $\hat E_{2n+1}(q)$, see
\cite{JosuatVerges2013Kim, Shin2010Zeng, Han1999RZeng, JosuatVerges2010}.
It would be interesting to find a combinatorial model for the new $q$-Euler numbers $E_q(n)$.\footnote{\, This combinatorial model has been found by
Qiong Qiong Pan and Jiang Zeng \cite{Pan2019ZengEuler}.}

\medskip

{\bf Acknowledgments}. The author would like to thank 
Alan Sokal, Bao-Xuan Zhu, Yan Zhuang and an anonymous referee 
for knowledgeable remarks and helpful suggestions.


\medskip

\bibliographystyle{plain}




\end{document}